\newcommand{\T}{\vec{T}}
\newcommand{\Proj}{\tens{P}}
\newcommand{\No}{\tens{N}}
\newcommand{\x}{\vec{x}}
\newcommand{\vphi}{\vec{\varphi}}
\newcommand{\Defo}{\tens{H}}
\newcommand{\N}{{\mathbb  N}}
\newcommand{\R}{{\mathbb  R}}
\newcommand{\Z}{{\mathbb  Z}}
\newcommand{\PS}{{\mathbb  P}}
\newcommand{\laplace}{\Delta}
\newcommand{\Tang}{{\mathcal T}}
\newcommand{\Norm}{{\mathcal N}}
\newcommand{\Order}{{\mathcal O}}
\newcommand{\pd}{\partial}
\newcommand{\wo}{\setminus}
\newcommand{\dotarg}{\, \cdot \,}
\newcommand{\vecarg}[1]{\mathbf{#1}}
\DeclareMathOperator{\diag}{diag}
\DeclareMathOperator{\diver}{div}
\DeclareMathOperator{\trace}{trace}
\DeclareMathOperator{\cp}{cp}
\DeclareMathOperator{\codim}{codim}
\def\mypi{180}
\pgfmathsetmacro{\myphi}{\mypi/3}
\pgfmathsetmacro{\cosmyphi}{cos(\myphi)}
\pgfmathsetmacro{\sinmyphi}{sin(\myphi)}
\pgfmathsetmacro{\mypsi}{\mypi/8}
\pgfmathsetmacro{\cosmypsi}{cos(\mypsi)}
\pgfmathsetmacro{\sinmypsi}{sin(\mypsi)}
\journalname{}
\title{An Embedding Technique for the Solution of Reaction-Diffusion Equations on Algebraic Surfaces with Isolated Singularities}
\titlerunning{Reaction-Diffusion on Singular Curves}
\author{Parousia Rockstroh \and Thomas M\"{a}rz \and Steven J. Ruuth
	\thanks{Parousia Rockstroh was supported by a grant from NSERC Canada.
	\\ Thomas M\"{a}rz was supported by award KUK-C1-013-04 made by King Abdullah University of Science and Technology (KAUST).
	\\ Steven J. Ruuth was supported in part by a grant from NSERC Canada and by award KUK-C1-013-04 made by King Abdullah University of Science and Technology (KAUST).}}
\authorrunning{P. Rockstroh , T. M\"{a}rz , S. J. Ruuth} % if too long for running head
\institute{
	P. Rockstroh \at
	Cambridge Centre for Analysis, \\
	University of Cambridge, \\
	% Wilberforce Road, \\
	Cambridge CB3 0WA, UK. \\
	\email{\texttt{P.Rockstroh@maths.cam.ac.uk}}.
	\and
	T. M\"{a}rz \at
	Oxford Centre for Collaborative Applied Mathematics, \\
	University of Oxford, \\
	Oxford OX1 3LB, UK. \\
	\email{\texttt{maerz@maths.ox.ac.uk}}.
	\and
	S.J. Ruuth \at
	Mathematics Dept., \\
	Simon Fraser University, \\
	Burnaby, BC, Canada, V5A-1S6. \\
	\email{\texttt{sruuth@sfu.ca}}. }
\date{Manuscript as of \today}
\begin{document}
\maketitle

\begin{abstract}
In this paper we construct a parametrization-free embedding technique for numerically evolving reaction-diffusion PDEs defined on algebraic curves that possess an isolated singularity. 
In our approach, we first desingularize the curve by appealing to techniques from algebraic geometry. We create a family of smooth curves in higher dimensional space that correspond to
the original curve by projection. Following this, we pose the analogous reaction-diffusion PDE on each member of this family and show that the solutions (their projection onto the original domain) 
approximate the solution of the original problem. Finally, we compute these approximants numerically by applying the Closest Point Method which is an embedding technique for solving PDEs on smooth surfaces
of arbitrary dimension or codimension, and is thus suitable for our situation. In addition, we discuss the potential to generalize the techniques presented for higher-dimensional surfaces
with multiple singularities.
\keywords{Closest Point Method \and implicit surfaces \and surface-intrinsic differential operators \and Laplace--Beltrami operator \and blow-up \and singularity 
\and resolution of singularity \and singular differential operator}
% \PACS{PACS code1 \and PACS code2 \and more}
% \subclass{MSC code1 \and MSC code2 \and more}
\end{abstract}

\section{Introduction}\label{sect:Intro}
In this paper we study the problem of evolving a reaction-diffusion PDE on algebraic surfaces that contain isolated singularities. 
Singular problems have been studied extensively from an analytic perspective within the differential geometry community, originating with the work of Cheeger in  \cite{cheeger1979spectral},
where a functional calculus was given for the Laplace operator on a cone. This in turn led to fundamental solutions of the heat equation and wave equation on a cone
as given in \cite{cheeger1983spectral} and \cite{cheeger1982diffraction}. More recently, researchers such as Jeffres and Loya in \cite{jeffres2003regularity} 
have derived asymptotic expansions for solutions of geometric PDEs on conical manifolds. 
Our goal is to construct a parametrization-free embedding-based numerical method that approximates geometric PDEs posed on surfaces with cusp singularities.
In addition to being an interesting problem in its own right, the construction of such a numerical method might be applied to the approximation of PDEs on smooth but highly curved surfaces
such as the simulation of heat flow on filaments of high curvature or the development of cortical maps in which regions of high curvature occur along the sulcal lines.

Our approach will be to use and extend existing embedding techniques. Within the past decade several numerical embedding methods have been created for solving PDEs that are posed
on smooth surfaces. These include level-set methods such as those presented in \cite{greer2006fourth} and \cite{greer2006improvement} as well as the Closest Point Method as presented in
\cite{Merriman} and \cite{RuuthMerriman} and further extended in \cite{Colin1} and \cite{marz2012calculus}. 
However, the existing embedding techniques are low-order accurate or inconsistent when applied to PDEs on surfaces with singularities.
This is a result of the fact that within the class of embedding techniques, a smooth representation of the surface is needed in
a narrow computational band surrounding the given curve or surface. 
When the curvature of the surface increases it becomes more difficult, and computationally expensive, to discretize in such
a way that the region of high curvature is accurately captured. Moreover, when the curvature becomes infinite, as is the case with a cusp, classical
numerical embedding methods may become inaccurate or fail due to the loss of smoothness in the surface representation.

Our approach to evolving PDEs on singular surfaces is to evolve a modified PDE on a regularized version of the surface.
The first step here is to resolve the singularity in the underlying domain. 
To this end we employ a standard procedure from algebraic geometry which is known as ``blowing-up'' \cite{Hartshorne,Harris,smith2000invitation}. 
We construct the blow-up map in such a way that it produces a one-parameter family of smooth surfaces that approximate the original singular surface. 
The regularization leads to the same reaction-diffusion PDE on each of the smooth surfaces within the one-parameter family. 
Solving these PDE problems yields accordingly a family of functions which converges to the solution of the original problem posed on the singular surface.

By this approach, the domain of the PDE will change with the parameter. This is an effect which is disadvantageous for the numerics.  
Thus, we transform the PDE problems so that they all have the same smooth surface as a domain. 
This produces a one-parameter family of variable coefficient PDEs with coefficients depending now on the parameter which was introduced by the desingularization procedure.
The resulting equations are now ready for the application of an embedding technique using standard uniform grid numerical methods on a smooth domain.

In the construction of our numerical technique, we must be mindful of the fact that the blow-up procedure generally embeds the regularized surface into a higher dimensional space.
It is therefore necessary to choose a numerical method that is effective for arbitrary co-dimensional embeddings. The Closest Point Method is one such embedding method, as shown 
in \cite{marz2012calculus} and \cite{RuuthMerriman}, and is our method of choice.

The paper is organized as follows. In Section~\ref{sect:Surf}, we review the definitions relating to smooth embedded surfaces and surface-intrinsic differential operators.
Section~\ref{sect:Playground} introduces our problem of interest: a simple reaction-diffusion equation posed on the closed planar curve given by $y^2=x^3-x^4$.
This curve has a single cusp singularity at the origin. In addition, we construct an analytical solution to the given problem which will be used later to assess our numerical results.
In Section~\ref{sect:Regularization} we describe the regularization of the problem in detail. In particular, we demonstrate the blow-up procedure, construct analytical solutions
to the regularized problems, prove the convergence to the solution of the original reaction-diffusion equation, and describe the transformation which prepares the problem
for an embedding technique. In Section \ref{sect:Numerics}, we review the basics of the Closest Point Method and apply it to the regularized problem.
Moreover, we give convergence studies that demonstrate the robustness of the method. Section~\ref{sect:Extensions} discusses the extension of the presented
method to general algebraic surfaces with finitely many singularities. 

\section{Algebraic Surfaces and Surface Intrinsic Differentials}\label{sect:Surf}
This section begins by collecting the definitions relating to smooth embedded surfaces and surface-intrinsic differential operators.

In this paper, we consider real algebraic manifolds, which is precisely the class of curves and surfaces $S \subset \R^n$ given implicitly
as the solution of a system of $m = \codim S$ polynomial equations over the field $\R$. We write this system as
\begin{equation}
	\vphi( \x ) = 0 \quad \Leftrightarrow \quad \x \in S
\end{equation}
with $\vphi: \R^n \to \R^m$, where each component $\varphi_i$, $i \in \{1,\ldots,m\}$, is a polynomial in $n$ variables $x_j$, $j \in \{1,\ldots,n\}$.

\begin{definition}{(Tangent space)}\label{def:AlgTan}
The straight line $ \x + t \vec{v}$, $t \in \R$ is tangent to the surface $S$ at $\x \in S$  if the root at $t=0$ of the polynomial $\vphi( \x + t \vec{v} )$ (a polynomial in $t$)
has a multiplicity of at least 2. By Taylor expansion
\begin{equation}
	\vphi( \x + t \vec{v} ) = t \; D\vphi(\x) \cdot \vec{v} + t^2 \; \vec{p}(t) \;,
\end{equation}
where $t^2 \; \vec{p}(t)$ is the polynomial remainder and $\vec{p}$ is some polynomial, we see that this multiplicity requirement is equivalent to
\begin{equation}\label{eqn:TangNorm}
	D\vphi(\x) \cdot \vec{v} = 0 \;.
\end{equation}
In this case we call $\vec{v}$ a tangent vector of $S$ at $\x$, and $\vec{v}$ is in the tangent space $\Tang_{\x} S$.
Moreover, by \eqref{eqn:TangNorm} the rows of $D\vphi(\x)$ span the normal space $\Norm_{\x} S$.
\end{definition}

Note that in the first part of Definition~\ref{def:AlgTan}, we define tangency without the use of limits or derivatives,
as is standard practice in algebraic geometry, see e.g. \cite{smith2000invitation}.
This will aid us in characterizing and resolving singularities as we will see later in the paper.

\begin{definition}{(Regularity/Singularity/Smoothness)}\label{def:Singularity}
	Let $S \subset \R^n$ be a real algebraic surface, then:
	\begin{enumerate}[(a)]
		\item a point $\x \in S$ is called \textbf{regular} if $D \vphi(\x)$, the Jacobian at $\x$, is a full rank matrix;
			in this case the columns of $D \vphi(\x)$ form a basis of the normal space $\Norm_{\x} S$,
		\item a point $\x \in S$ is called \textbf{singular} if $D \vphi(\x)$ is rank-deficient,
		\item the surface $S$ is called \textbf{smooth} if all points $\x \in S$ are regular.
	\end{enumerate}
\end{definition}

Given a regular point $\x \in S$ we set $\No(\x) := D\vphi(\x)^T$, hence $\No(\x)$ is the matrix that contains the normal vectors as given by the implicit description.
Moreover, we denote the orthogonal projector that projects onto the tangent space $\Tang_{\x} S$ of $\x \in S$ by $\Proj(\x)$.
For regular points $\x \in S$ one can write the matrix $\Proj(\x)$ as
\begin{equation}\label{eqn:Proj}
	\Proj:S \rightarrow \R^{n\times n} \;, \quad \Proj(\x) = I - \No(\x) \dotarg \No(\x)^{\dag} \;, \quad  \No(\x) = D\vphi(\x)^T\;,
\end{equation}
where $\No(\x)^{\dag}:= (\No(\x)^T \No(\x))^{-1} \No(\x)^T$ denotes the pseudo-inverse of $\No(\x)$.
In this paper we will be particularly interested in cases in which surfaces posses isolated singular points, hence $\No$ is full-rank and $\Proj$ is
given by \eqref{eqn:Proj} almost everywhere.

We now proceed to define a calculus for surface functions that does not depend upon parametrizations.
Such a calculus has been introduced earlier (see for example \cite{ambrosio1994level,deckelnick2010h,dziuk2008eulerian,gray1993mathematical,marz2012calculus,milnor1997topology});
here we will use much of the same notation and terminology as \cite{marz2012calculus}.
We point out here that throughout the paper surface functions, i.e., functions of the form $f:S \rightarrow \mathbb{R}^m$, are not required to be polynomials.

\begin{definition}\label{def:DiffOps}
	Let $S$ be a real algebraic surface and $\x \in S$ be a regular point. We consider the following $C^1$-smooth surface functions:
	a scalar function $u:S \rightarrow \mathbb{R}$, a vector-valued function $f:S \rightarrow \mathbb{R}^m$, and a vector field $g:S \rightarrow \mathbb{R}^n$
	with $C^1$-smooth local extensions into $\R^n$ called $u_E$, $f_E$, $g_E$ (see \cite{marz2012calculus}).
	We define the surface gradient $\nabla_S$, the surface Jacobian $D_S$, and the surface divergence $\diver_{S}$ at the regular point $\x$ by:
	\begin{align*}
		\nabla_{S} u(\x)^{T} &:= \nabla u_{E} (\x)^T \cdot \Proj(\x) \;, \\
		D_{S} f(\x) &:= D f_{E} (\x) \cdot \Proj(\x) \;,\\
		\diver_{S} g(\x) &:= \trace(D_{S}g(\x)) = \trace(D g_{E}(\x) \cdot \Proj(\x)) \;.
	\end{align*}
	Here $\nabla$ is the gradient and $D$ the Jacobian in the embedding space $\R^n$ applied to the extensions of the surface functions.
\end{definition}

By combining the above operators, we may form higher order differential operators on surfaces. One such operator is the Laplace-Beltrami operator, i.e.,
the Laplace operator for surfaces, which we define below.
\begin{definition}\label{def:Lap}
	Given a smooth surface $S$, the Laplace-Beltrami operator is defined by:
	\begin{equation}
		\laplace_{S}u(\x) := \diver_{S} (\nabla_{S} u)(\x),
	\end{equation}
	for $C^l$-smooth, $l \ge 2$, scalar surface functions $u$.
\end{definition}

\begin{remark}
	In some literature (e.g. \cite{Chavel1}, \cite{Chavel2}, \cite{Jost}, \cite{gallot1988inegalites}) the Laplace-Beltrami operator is defined by
	$\laplace_{S} u = - \diver_{S} (\nabla_{S} u)$ to give a positive operator.
	This paper defines the Laplace-Beltrami operator to be a negative operator, as is standard when studying diffusion processes on surfaces \cite{rubinstein1998partial}.
\end{remark}

\section{Reaction-Diffusion on a Cuspidal Curve}\label{sect:Playground}
The differential operators given in Section~\ref{sect:Surf} are well-defined on smooth regions of a surface.
We are, however, interested in examining the case where the surface $S$ posses an isolated singularity.

For now, we focus on a sample problem which consists of a simple reaction-diffusion equation posed on a closed curve
that has an isolated cusp singularity at the origin $(x,y)=(0,0)$, but is smooth everywhere else.
This section also constructs a parametric solution to the sample problem which we use later in the paper to validate our results.

\subsection{The Reference Problem}\label{sect:RefProblem}
The domain of our reference problem is the algebraic curve given by
\begin{equation}\label{eqn:CuspCurve}
	y^2=x^3-x^4 \;.
\end{equation}
Formally, we let $\varphi: \R^2 \to \R$ be given by $\varphi(x,y) = y^2 - x^3 + x^4$ so that 
\begin{equation}
	S = \{ (x,y) \in \R^2: \varphi(x,y) = 0 \} \;.
\end{equation}
For the remainder of the paper, unless stated otherwise, we will take this as our definition of $S$.
Figure \ref{fig:CuspidalCurve} shows a plot of $S$. Clearly, the origin is a singular point.
We verify this using Definition~\ref{def:Singularity}:
the Jacobian of $\varphi$ is given by $D \varphi(x,y) = (-3x^2 + 4x^3, 2y)$. Since this is a $1 \times 2$-matrix, the singular points are those
where the Jacobian vanishes, $D\varphi(x,y) = 0$. We find that the only singular point is the origin $(x,y)=(0,0)$.
Consequently, the set $S \setminus \{0\}$ is a smooth curve by Definition~\ref{def:Singularity}(c).
\begin{figure}[h!]
	\centering
	\begin{tikzpicture}[scale=4,thick]
		\draw[->] (-.25,0) -- (1.25,0) node[right] {$x$};
		\foreach \x/\xtext in {.25/\frac{1}{4},.5/\frac{1}{2},.75/\frac{3}{4},1} \draw (\x ,1pt) -- (\x ,-1pt) node[anchor=north] {\scriptsize $\xtext$};

		\draw[->] (0,-.5) -- (0,.5) node[above] {$y$};
		\foreach \y/\ytext in {-.25/-\frac{1}{4}, 0, .25/\frac{1}{4}} \draw (1pt, \y) -- (-1pt,\y) node[anchor=east] {\scriptsize $\ytext$};

		\draw[color=red,domain=-3.141:3.141,smooth,variable=\t] plot ({.5*(1+cos(\t r))},{.25*sin(\t r)*(1+cos(\t r))});
	\end{tikzpicture}
	\caption{The red curve is given by $y^2=x^3-x^4$ and possesses an isolated singularity at the origin.} \label{fig:CuspidalCurve}
\end{figure}
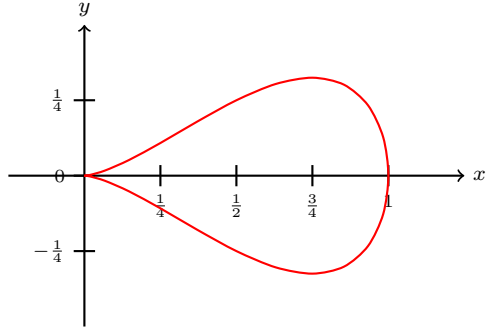

\begin{subequations}
	We now state the reference problem. Consider the reaction-diffusion problem
	\begin{align}\label{eqn:RefProbPDE1}
		& \pd_t u = \laplace_{S} u - \mu^2 u \;, \quad  t \in (0,\infty) \;,\; (x,y) \in S \setminus \{0\}
	\end{align}
	posed on the smooth part of $S$, i.e., on $S \setminus \{0\}$.
	By excluding the origin we introduce an interior boundary. The corresponding ``gap'' is closed by imposing the following boundary conditions
	\begin{align}
		& u(t,0_+) = u(t,0_-) \;, \label{eqn:RefProbBC1} \\
		& \pd_{\T} u(t,0_+) = \pd_{\T} u(t,0_-)  \;, \quad  t \in (0,\infty) \;. \label{eqn:RefProbBC2}
	\end{align}
	Condition \eqref{eqn:RefProbBC1} is a continuity requirement while \eqref{eqn:RefProbBC2} is Kirchhoff's circuit law regarding the fluxes into the interior boundary node.
	Here, $\T$ is the counter-clockwise tangent field of the curve $S \setminus \{0\}$ and $\pd_{\T} u(t,x,y) = \T^{T}(x,y) \cdot \nabla_{S}u(t,x,y)$ is the corresponding directional derivative.
	The positive and negative subscripts denote one-sided limits.
	The initial condition is given by:
	\begin{align}\label{eqn:RefProbIC}
		& u(0,x,y) = u_0(x,y)  \;,\quad (x,y) \in S \setminus \{0\} \;.
	\end{align}
	The initial boundary value problem (IBVP) specified by equations~\eqref{eqn:RefProbPDE1}--\eqref{eqn:RefProbIC} gives our reference problem.
\end{subequations}

\subsection{Parametric Solution of the Reference Problem} \label{sect:ArclengthSection}
We will now construct a parametric solution to the reference problem. This exact solution will be used to perform numerical convergence studies on our method.

Let $\bar{\gamma} : (0,L) \to S \setminus \{0\}$, $s \to \bar{\gamma}(s)$, denote a counter-clockwise arc-length parametrization of $S \setminus \{0\}$
with $\lim\limits_{s \to 0} \bar{\gamma}(s) = 0 = \lim\limits_{s \to L} \bar{\gamma}(s)$.
The counter-clockwise tangent field is parametrized by $\T \circ \bar{\gamma}(s) = \bar{\gamma}'(s)$, and one-sided limits of a surface function $f$
are given, in terms of $\bar{\gamma}$, by $f(0_-) = \lim\limits_{s \to 0} f \circ \bar{\gamma}(s) $, $f(0_+) = \lim\limits_{s \to L} f \circ \bar{\gamma}(s) $.

\begin{subequations}
	We denote by $\bar{u}(t,s) := u(t,\bar{\gamma}(s))$ the parametric solution of the reference problem corresponding to the arc-length parametrization $\bar{\gamma}$. Then, $\bar{u}$ must satisfy the PDE:
	\begin{align}\label{eqn:ArcPDE1}
		& \pd_t \bar{u} = \pd_s^2 \bar{u} - \mu^2 \bar{u} \;, \quad t \in (0,\infty) \;,\; s \in (0,L)
	\end{align}
	with periodic boundary conditions
	\begin{align}
		& \lim\limits_{s \to 0} \bar{u}(t,s) = \lim\limits_{s \to L} \bar{u}(t,s) \;, \\
		& \lim\limits_{s \to 0} \pd_s \bar{u}(t,s) = \lim\limits_{s \to L} \pd_s \bar{u}(t,s) \;, \quad t \in (0,\infty) \;, \label{eqn:ArcPDE4}
	\end{align}
	and initial condition
	\begin{align}
		& \bar{u}(0,s) = \bar{u}_0(s) = u_0(\bar{\gamma}(s)) \;,\quad s \in (0,L) \;.
	\end{align}
\end{subequations}
Hence, we obtain an arclength-parametric solution by Fourier series:
\begin{align}\label{eqn:ArclengthSolution}
	\bar{u}(t,s) &= e^{-\mu^2 t} \sum\limits_{m=-\infty}^{\infty} c_m \; e^{-\left(\frac{2\pi}{L}\right)^2 m^2 t} \; e^{i \left(\frac{2\pi}{L}\right) m \, s} \;, &
	c_m & = \frac{1}{L} \int\limits_{0}^{L}  \bar{u}_0 (s) \; e^{-i \left(\frac{2\pi}{L}\right) m \, s} \; ds \;.
\end{align}

An arclength parametrization may not be given, so we also express \eqref{eqn:ArclengthSolution} in terms of an arbitrary regular parametrization
$\gamma : (-\pi,\pi) \to S \setminus \{0\}$, $\theta \to \gamma(\theta)$. The arclength parameter $s = a(\theta)$ and the parameter $\theta$ are related via the arclength function:
\begin{equation}\label{eqn:AL1}
	a(\theta) = \int\limits_{-\pi}^{\theta} |\gamma'(\tau)| d\tau \;,
\end{equation}
so that $\bar{\gamma}(a(\theta)) = \gamma(\theta)$.
As a matter of notation, we let $\hat{u}(t,\theta)$ denote the parametric solution with respect to the parametrization $\gamma(\theta)$, i.e. $\hat{u}(t,\theta) = u(t,\gamma(\theta))$.
The relation between $\hat{u}$ and $\bar{u}$ is given by
\begin{equation*}
	\bar{u}(t,a(\theta)) = u(t,\bar{\gamma}(a(\theta))) = u(t,\gamma(\theta)) = \hat{u}(t,\theta) \;.
\end{equation*}
Consequently, by a change of variables we obtain the following series representation:
\begin{align}\label{eqn:SolOrigNonAL}
	\hat{u}(t,\theta) &= e^{-\mu^2 t} \sum\limits_{m=-\infty}^{\infty} c_m \; e^{-\left(\frac{2\pi}{L}\right)^2 m^2 t} \; e^{i \left(\frac{2\pi}{L}\right) m \, a(\theta)} \;, &
	c_m &= \frac{1}{L} \int\limits_{-\pi}^{\pi}  \hat{u}_0 (\theta) \; e^{-i \left(\frac{2\pi}{L}\right) m \, a(\theta)} \; a'(\theta)\; d\theta \;.
\end{align}
The parametrization that we use is $\gamma(\theta) = ( 1/2 (1 + \cos(\theta)) , 1/4 (1 + \cos(\theta)) \sin(\theta))$ (derived in Section~\ref{sect:SingularityResolution})
and is not based on arclength, so we will work with \eqref{eqn:SolOrigNonAL}.

\section{Regularization by Desingularization of the Domain}\label{sect:Regularization}

\subsection{Resolution of the Singularity in the Curve $y^2=x^3-x^4$}\label{sect:SingularityResolution}
In this section we resolve a singular point by a desingularization technique from algebraic geometry which is known as a blow-up (see for example, \cite{Hartshorne},\cite{Harris}, and \cite{smith2000invitation}).
For our curve $S$ (see Figure~\ref{fig:CuspidalCurve}) this will amount to viewing the cuspidal curve $S$ as a two-dimensional projection of a non-singular curve $\tilde{S}$ that lies in three-dimensional space.
The blow-up process also appears in the catastrophe theory literature, see for example \cite{arnold1981singularity}.

Recall, the cuspidal curve $S$ is given by the zero level set of $\varphi(x,y) := y^2 - x^3 + x^4$ which is singular at the origin. 
The Jacobian vanishes at the origin $D \varphi(0,0) = (0,0)$, hence the equality $D \varphi(0,0) \cdot v= 0$ holds for all $v \in \R^2$.
This means that the tangent space $\Tang_0 S$ is equal to all of $\R^2$ instead of being a one-dimensional subspace.

Now, we will work with the tangent space $\Tang_0 S$ in an algebraic fashion by examining all straight lines passing through the origin.
Specifically, we will consider
\begin{align}
	y^2 - x^3 + x^4 &= 0 \;,\label{eqn:Sys1_Cusp} \\
	y - \frac{z}{\varepsilon} x &= 0 \;, \label{eqn:Sys1_SLine}
\end{align}
where \eqref{eqn:Sys1_SLine} describes a family of straight lines through the origin with different slopes $z/\varepsilon \in \R$.
Figure~\ref{fig:BlowUp}~a) shows the situation for $\varepsilon=1$. 

Substituting \eqref{eqn:Sys1_SLine} into \eqref{eqn:Sys1_Cusp}, yields
\begin{align}
	x^2 \cdot \left( \left(\frac{z}{\varepsilon}\right)^2 - x + x^2 \right) &= 0 \label{eqn:Sys2_Cusp} \;, \\
	y - \frac{z}{\varepsilon} x &= 0 \;. \label{eqn:Sys2_SLine}
\end{align}
We may apply Definition~\ref{def:AlgTan} to \eqref{eqn:Sys2_Cusp} by noting that the substitution step is equivalent to evaluating $\varphi$ on the straight line parametrized by $(t,tz/\varepsilon)$
\begin{equation}
	\varphi\left(t,t \frac{z}{\varepsilon}\right) = t^2 \cdot \left( \left(\frac{z}{\varepsilon}\right)^2 - t + t^2 \right) \;.
\end{equation}
We see that $\varphi(t,tz/\varepsilon)$ as a polynomial in $t$ has a root at $t=0$ with multiplicity 2 and this is true for all possible choices of $z/\varepsilon$.
This implies that any straight line passing through the origin is tangent to the singular curve $S$ according to Definition~\ref{def:AlgTan}. 

\begin{figure}[h!]
\begin{minipage}{.525\textwidth}
	\centering
	\begin{tikzpicture}[scale=4,thick]
		\draw[->] (-.25,0) -- (1.25,0) node[right] {$x$};
		%\foreach \x in {.25,.5,.75,1} \draw (\x ,1pt) -- (\x ,-1pt) node[anchor=north] {\scriptsize $\x$};
		\foreach \x/\xtext in {.25/\frac{1}{4},.5/\frac{1}{2},.75/\frac{3}{4},1} \draw (\x ,1pt) -- (\x ,-1pt) node[anchor=north] {\scriptsize $\xtext$};

		\draw[->] (0,-.5) -- (0,.5) node[above] {$y$};
		%\foreach \y in {-.25,0,.25} \draw (1pt, \y) -- (-1pt,\y) node[anchor=east] {\scriptsize $\y$};
		\foreach \y/\ytext in {-.25/-\frac{1}{4}, 0, .25/\frac{1}{4}} \draw (1pt, \y) -- (-1pt,\y) node[anchor=east] {\scriptsize $\ytext$};

		\draw[color=blue,domain=-.25:1.25,smooth] plot (\x,{.5*\x}) node[anchor=south] {$z=\frac{1}{2}$};
		\draw[color=blue,domain=-.25:1.25,smooth] plot (\x,{.25*\x}) node[anchor=south] {$z=\frac{1}{4}$};
		\draw[color=blue,domain=-.25:1.25,smooth] plot (\x,{.125*\x}) node[anchor=south] {$z=\frac{1}{8}$};
		\draw[color=blue,domain=-.25:1.25,smooth] plot (\x,{-.5*\x}) node[anchor=south] {$z=-\frac{1}{2}$};
		\draw[color=blue,domain=-.25:1.25,smooth] plot (\x,{-.25*\x}) node[anchor=south] {$z=-\frac{1}{4}$};
		\draw[color=blue,domain=-.25:1.25,smooth] plot (\x,{-.125*\x}) node[anchor=south] {$z=-\frac{1}{8}$};

		\draw[color=red,domain=-3.141:3.141,smooth,variable=\t] plot ({.5*(1+cos(\t r))},{.25*sin(\t r)*(1+cos(\t r))});
	\end{tikzpicture}
	\vspace{51pt}

	\scriptsize{a) All straight lines through the origin are tangent to $S$ (red)}
\end{minipage}
\hfill
\begin{minipage}{.45\textwidth}
	\centering
	\includegraphics[width=\textwidth]{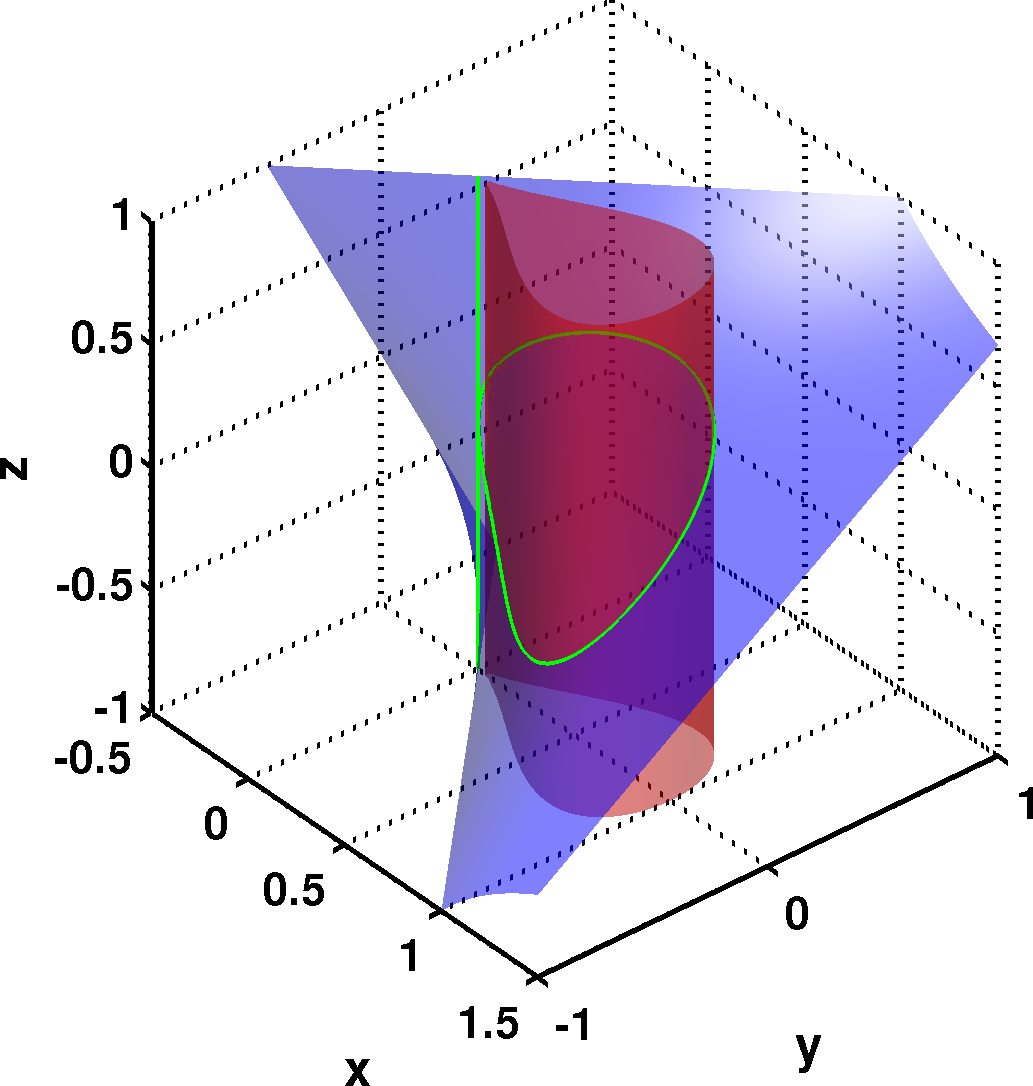}
	\vspace{1pt}

	\scriptsize{b) Intersection of the level-sets \eqref{eqn:Sys1_Cusp} and \eqref{eqn:Sys1_SLine} ($\varepsilon=1$)}
\end{minipage}
\medskip

\begin{minipage}{.525\textwidth}
	\centering
	\begin{tikzpicture}[scale=4.5,thick]
		\draw[color=red,domain=-3.141:3.141,smooth,variable=\t] plot ({\cosmyphi * .5 * (1+cos(\t r)) - \sinmyphi * .25 * sin(\t r) * (1+cos(\t r))},0,{\sinmyphi * .5 * (1+cos(\t r)) + \cosmyphi * .25 * sin(\t r) * (1+cos(\t r))});
		\draw[color=green,domain=-3.141:3.141,smooth,variable=\t] plot ({\cosmyphi * .5 * (1+cos(\t r)) - \sinmyphi * .25 * sin(\t r) * (1+cos(\t r))},{-.5*sin(\t r)},{\sinmyphi * .5 * (1+cos(\t r)) + \cosmyphi * .25 * sin(\t r) * (1+cos(\t r))});

		\draw[->] (-.25*\cosmyphi,0,-.25*\sinmyphi) -- (1.25*\cosmyphi,0,1.25*\sinmyphi) node[anchor=north] {$x$};
		\draw[->] (-.5*\sinmyphi,0,.5*\cosmyphi) -- (.5*\sinmyphi,0,-.5*\cosmyphi) node[anchor=west] {$y$};

		%\colorlet{mygreen}{green!80!black}
		\draw[->] (0,-.5,0) -- (0,.5,0) node[anchor=south] {$z$};
	\end{tikzpicture}
	\vspace{47pt}

	\scriptsize{c) The original cuspidal curve (red) is the projection of the desingularized curve (green) onto the surface $z=0$}
\end{minipage}
\hfill
\begin{minipage}{.45\textwidth}
	\centering
	\includegraphics[width=\textwidth]{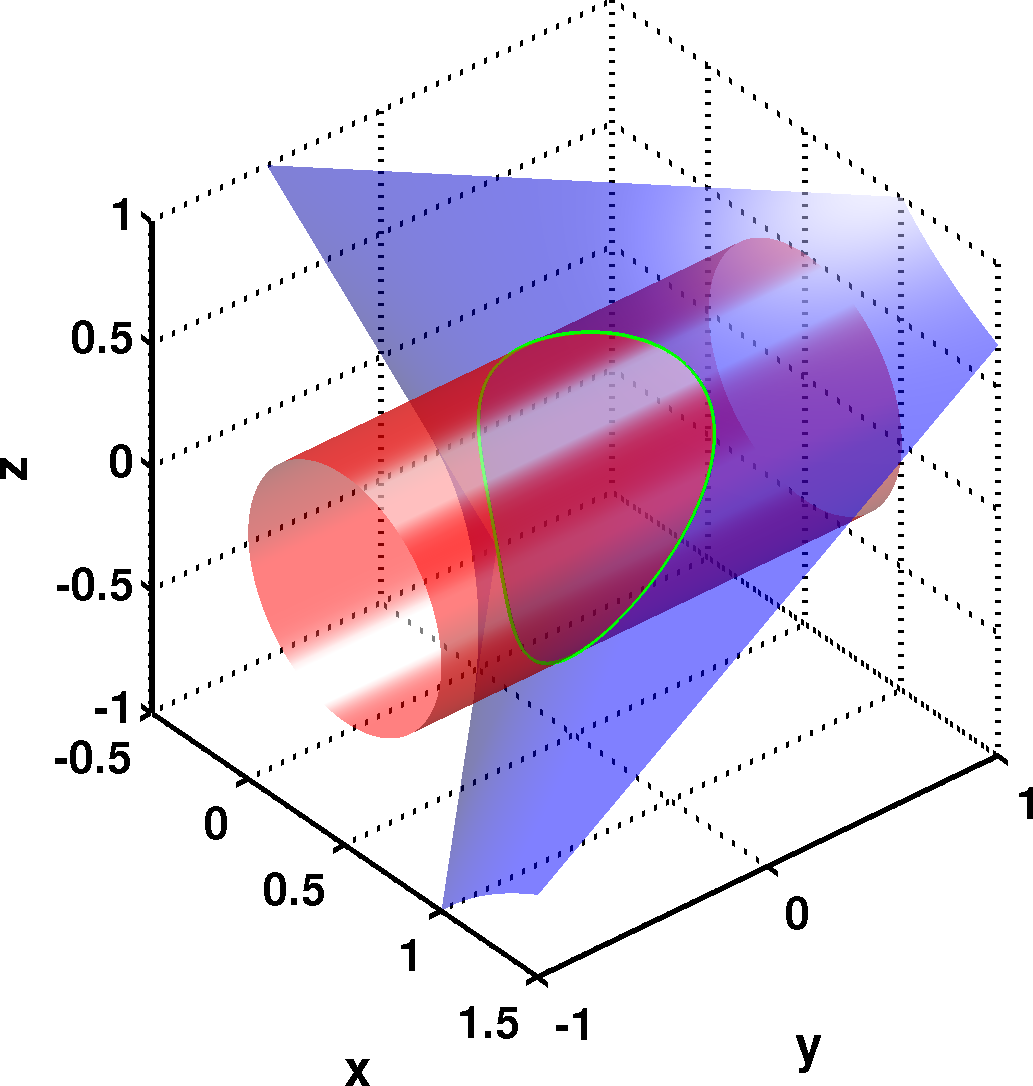}
	\vspace{1pt}

	\scriptsize{d) Intersection of the level-sets \eqref{eqn:Sys3_Desing} and \eqref{eqn:Sys3_SLine} ($\varepsilon=1$)}
\end{minipage}
\caption{Visualization of the blow-up process for a parameter value of $\varepsilon=1$.
a) shows the singular curve $S$ \eqref{eqn:Sys1_Cusp} in red and some tangents ($z$ is the slope) at the origin \eqref{eqn:Sys1_SLine} in blue.
b) visualizes the system of equations \eqref{eqn:Sys1_Cusp} (red surface) and \eqref{eqn:Sys1_SLine} (blue surface) where $z$ now represents the third dimension.
	The intersection of the two surfaces (green) contains the desingularized curve that we seek. Note that the intersection also contains the $z$-axis.
c) shows the desingularized curve in green, detached from any surfaces.
d) visualizes the system of equations \eqref{eqn:Sys3_Desing} (red surface) and \eqref{eqn:Sys3_SLine} (blue surface). In contrast to b), the red surface is now
	such that the intersection of the two surfaces (green) is exactly the desired desingularized curve.}\label{fig:BlowUp}
\end{figure}

The key to the blow-up is to interpret the system of equations \eqref{eqn:Sys2_Cusp} and \eqref{eqn:Sys2_SLine} in $\R^3$ with $z$ as the third variable.
Each of the equations \eqref{eqn:Sys2_Cusp} and \eqref{eqn:Sys2_SLine} defines a surface in $\R^3$ and the intersection of the surfaces
is a union of two curves. For a visual representation of this, see Figure~\ref{fig:BlowUp}~b). The first of these two curves is the $z$-axis, since
if $x=0$ then \eqref{eqn:Sys2_Cusp} is already satisfied while \eqref{eqn:Sys2_SLine} implies only $y=0$, hence all triples $(0,0,t)$ solve the system.
The second curve is obtained by considering the case where $x \neq 0$ and thus satisfies the system
\begin{align}
	\left(\frac{z}{\varepsilon}\right)^2 - x + x^2  &= 0 \;, \label{eqn:Sys3_Desing} \\
	y - \frac{z}{\varepsilon} x &= 0 \;. \label{eqn:Sys3_SLine}
\end{align}
since we can divide by $x$. Note that the origin $(0,0,0)$ is also a solution of \eqref{eqn:Sys3_Desing} and \eqref{eqn:Sys3_SLine} and thus belongs to both of these curves.
Comparing \eqref{eqn:Sys2_Cusp} and \eqref{eqn:Sys3_Desing} we see that the $x^2$-factor which caused the singularity according to our discussion above is now gone.
Hence, this second curve is the desingularization that we seek and we display it in Figure~\ref{fig:BlowUp}~c).   

Finally, we complete the square in \eqref{eqn:Sys3_Desing} with respect to $x$ and obtain an equivalent desingularized system:
\begin{align}
	\left(\frac{z}{\varepsilon}\right)^2 + \left(x - \frac{1}{2}\right)^2 - \frac{1}{4}  &= 0 \;, \label{eqn:Sys4_Desing}\\
	\varepsilon y - zx &= 0 \;. \label{eqn:Sys4_SLine}
\end{align}
Figure~\ref{fig:BlowUp}~d) illustrates the system of equations \eqref{eqn:Sys4_Desing} and \eqref{eqn:Sys4_SLine} as the intersection of two surfaces.
As can be seen from Figure~\ref{fig:BlowUp}~d) or the form of the equation, \eqref{eqn:Sys4_Desing} defines an elliptical cylinder.

The curve $\tilde{S}_{\varepsilon}$ described by the system of equations \eqref{eqn:Sys4_Desing} and \eqref{eqn:Sys4_SLine} can be parametrized
by $\tilde{\gamma}_{\varepsilon}: [-\pi,\pi) \to \R^3$, where
\begin{equation}\label{eqn:BlowUpCurve}
	\tilde{\gamma}_{\varepsilon}(\theta) := \Defo_{\varepsilon} \cdot \tilde{\gamma}(\theta) \quad \text{with} \quad
	\Defo_{\varepsilon} := \left[
							\begin{matrix}
								1 & 0 & 0 \\
								0 & 1 & 0 \\
								0 & 0 & \varepsilon
							\end{matrix}
							\right]
	\;,\quad
	\tilde{\gamma}(\theta) := \left(
										\begin{matrix}
											\frac{1}{2} (1 + \cos(\theta)) \\
											\frac{1}{4} (1 + \cos(\theta)) \sin(\theta) \\
											\frac{1}{2} \sin(\theta)
										\end{matrix}
										\right) \;.
\end{equation}
The system of equations \eqref{eqn:Sys4_Desing} and \eqref{eqn:Sys4_SLine} defines a one-parameter family of desingularized curves $\tilde{S}_{\varepsilon}$ depending on $\varepsilon$.
The parameter value $\varepsilon=1$ corresponds to the desingularized reference curve $\tilde{S}$ which is parametrized by $\tilde{\gamma}: [-\pi,\pi) \to \R^3$ of \eqref{eqn:BlowUpCurve}.
All further desingularizations $\tilde{S}_{\varepsilon}$ of our one-parameter family are deformations of $\tilde{S}$ where the linear deformation map is given in \eqref{eqn:BlowUpCurve} 
by the matrix $\Defo_{\varepsilon}$.

As $\varepsilon>0$ is reduced, the curve $\tilde{S}_{\varepsilon}$ will become an increasingly accurate approximation of the original singular curve $S$. Indeed,
$\tilde{S}_{0}$ is a copy of $S$ embedded in the $xy$-plane of $\R^3$.
In the situation where $\varepsilon=0$, the matrix $\Defo_{0}$ is the orthogonal projector onto the $xy$-plane, hence $\tilde{\gamma}_{0}$ parametrizes $\tilde{S}_{0}$ and
we obtain a parametrization $\gamma$ of the original singular curve $S \subset \R^2$ by dropping the trivial $z$-component of $\tilde{\gamma}_{0}$.
By reducing the domain of $\gamma$ to the open interval $(-\pi,\pi)$ we get the regular parametrization  of the smooth part $S \wo \{0\}$ 
mentioned in Section~\ref{sect:ArclengthSection}.

Finally, we note that the blow-up process provides us with a one-to-one correspondence between $(x,y,z) \in \tilde{S}_{\varepsilon}$ and $(x,y) \in S$.
The first map of this correspondence is clearly the projection
\begin{equation}\label{eqn:BlowDownMap}
	\tilde{S}_{\varepsilon} \to S \;, \qquad (x,y,z) \to (x,y) \;.
\end{equation}
The second map is the blow-up map, the inverse of this projection, and is given by
\begin{equation}\label{eqn:BlowUpMap}
	S \to \tilde{S}_{\varepsilon} \;, \qquad
	\left( \begin{matrix}
	      	x \\ y
	      \end{matrix} \right)
	\to
	\begin{cases}
		\left( x , y , \varepsilon \frac{y}{x} \right) &, \; x \neq 0 \\
		( 0 , 0 , 0 ) &, \; x=0 \;.
	\end{cases}
\end{equation}

\subsection{The Regularized Problem}\label{sect:RegProblem}
The regularization is carried out by considering the same reaction-diffusion PDE on the  new domain $\tilde{S}_{\varepsilon}$.
Because $\tilde{S}_{\varepsilon}$ is a smooth curve, we need not introduce an interior boundary. 
Instead, we have the following initial value problem (IVP) as the regularized problem
\begin{subequations}
	\begin{align}
		& \pd_t v_{\varepsilon} = \laplace_{\tilde{S}_{\varepsilon}} v_{\varepsilon} - \mu^2 v_{\varepsilon} \;, \quad  t \in (0,\infty) \;,\; (x,y,z) \in \tilde{S}_{\varepsilon} \;,\; \varepsilon > 0 \label{eqn:RegularizedPDE} \\
		& v_{\varepsilon}(0,x,y,z) = u_0(x,y)  \;,\quad (x,y,z) \in \tilde{S}_{\varepsilon} \;. \label{eqn:RegularizedIC}
	\end{align}
\end{subequations}
The initial condition \eqref{eqn:RegularizedIC} is constructed by lifting the initial condition of the original problem.
This is formally done via the one-to-one correspondence between $(x,y,z) \in \tilde{S}_{\varepsilon}$ and $(x,y) \in S$.
The projection map of \eqref{eqn:BlowDownMap} allows us to lift functions $f$ defined on $S$ to functions $g$ defined on $\tilde{S}_{\varepsilon}$ by $f \to g, \;  g(x,y,z) := f(x,y)$.

Because the solution $v_{\varepsilon}$ of the regularized problem does not have the same domain as the solution $u$ of the reference problem,
we introduce an approximant $u_{\varepsilon}$. The approximant $u_{\varepsilon}$ will be defined on the original domain $S$ and must tend to the desired solution $u$ as $\varepsilon \to 0$.
Note that the blow-map of \eqref{eqn:BlowUpMap} induces a pull-down of functions $g$ defined on $\tilde{S}_{\varepsilon}$ to functions $f$ defined on $S$ by $ g \to f, \; f(x,y) := g(x,y,\varepsilon y/x)$
(given that $x \neq 0$, otherwise we have $f(0,0)=g(0,0,0)$). 
We define the approximant $u_{\varepsilon}$ as the pull-down of $v_{\varepsilon}$, i.e.,
\begin{equation}\label{eqn:AppoxSol}
	u_{\varepsilon}: (0,\infty) \times S \wo \{0\} \to \R \;, \quad u_{\varepsilon}(t, x,y) := v_{\varepsilon} \left(t, x,y,\varepsilon \frac{y}{x} \right) \;,
\end{equation}
where $v_{\varepsilon}$ is the solution of \eqref{eqn:RegularizedPDE}--\eqref{eqn:RegularizedIC}.
In Section~\ref{sect:ConvTheo} we will prove that for any time $t >0$ the family $u_{\varepsilon}(t, \dotarg)$ converges uniformly to $u(t,\dotarg)$---the solution 
of the reference problem---as $\varepsilon$ tends to zero.

The parametric solution of the regularized problem is easily constructed.
The relevant parametrization $\tilde{\gamma}_{\varepsilon}$ of $\tilde{S}_{\varepsilon}$ is given by \eqref{eqn:BlowUpCurve}
and we denote the parametric solution by $\tilde{v}_{\varepsilon}(t,\theta) := v_{\varepsilon}(t,\tilde{\gamma}_{\varepsilon}(\theta))$.
Employing the same method as in Section~\ref{sect:ArclengthSection}, we obtain
\begin{align}\label{eqn:RegSolParam}
	\tilde{v}_{\varepsilon}(t,\theta) &= e^{-\mu^2 t} \sum\limits_{m=-\infty}^{\infty} c_{\varepsilon,m} \; e^{-\left(\frac{2\pi}{L_{\varepsilon}}\right)^2 m^2 t} \; e^{i \left(\frac{2\pi}{L_{\varepsilon}}\right) m \, a_{\varepsilon}(\theta)} \;, &
	c_{\varepsilon,m} &= \frac{1}{L_{\varepsilon}} \int\limits_{-\pi}^{\pi}  \hat{u}_0 (\theta) \; e^{-i \left(\frac{2\pi}{L_{\varepsilon}}\right) m \, a_{\varepsilon}(\theta)} \; a_{\varepsilon}'(\theta)\; d\theta
\end{align}
where $a_{\varepsilon}(\theta)$ is the arclength function corresponding to $\tilde{\gamma}_{\varepsilon}$, i.e.,
\begin{equation}\label{eqn:AL2}
	a_{\varepsilon}(\theta) := \int\limits_{-\pi}^{\theta} |\tilde{\gamma}_{\varepsilon}'(\tau)| d\tau \;.
\end{equation}
We will denote the length of the curve $\tilde{S}_{\varepsilon}$ by $L_{\varepsilon} := a_{\varepsilon}(\pi)$.
Finally, we note that setting the parameter value $\varepsilon$ to zero in \eqref{eqn:RegSolParam} gives $\tilde{v}_{0}(t,\theta) = \hat{u}(t,\theta)$, i.e. the parametric
solution of the reference problem \eqref{eqn:SolOrigNonAL}, even though the PDE \eqref{eqn:RegularizedPDE} is not defined for $\varepsilon=0$.

\subsection{Convergence Theory $(\varepsilon \to 0)$}\label{sect:ConvTheo}
We now turn our attention to proving the convergence of $u_{\varepsilon}(t,\dotarg)$ to $u(t,\dotarg)$.
As a first step, we establish continuity of $\tilde{v}_{\varepsilon}$ in $\varepsilon=0$ by the following lemma.

\begin{lemma} \label{lem:FourierConvergence}
	Let $\tilde{v}_{\varepsilon}(t,\dotarg)$ denote the parametric solution of the regularized problem as given in \eqref{eqn:RegSolParam} for parameter values $\varepsilon >0$.
	Let $\hat{u}(t,\dotarg)$ denote the parametric solution of the reference problem as given in \eqref{eqn:SolOrigNonAL}.
	Then for any fixed time $t > 0$, $\tilde{v}_{\varepsilon}(t,\dotarg)$ converges uniformly to $\hat{u}(t,\dotarg)$ as $ \varepsilon \to 0$.
\end{lemma}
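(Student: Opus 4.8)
The plan is to treat both $\tilde{v}_{\varepsilon}(t,\dotarg)$ and its limit candidate $\hat{u}(t,\dotarg) = \tilde{v}_{0}(t,\dotarg)$ as heat-smoothed Fourier series in the parameter $\theta$, and to run the classical ``uniformly small tails plus convergent finite part'' argument. The Gaussian factor $e^{-(2\pi/L_{\varepsilon})^2 m^2 t}$, available precisely because $t>0$, is what renders the tails of the series uniformly small in $\varepsilon$, while continuous dependence of the geometric data $a_{\varepsilon}, a_{\varepsilon}', L_{\varepsilon}$ on $\varepsilon$ takes care of the finitely many remaining terms.

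\emph{Continuous dependence of the geometry.} Since $\tilde{\gamma}_{\varepsilon} = \Defo_{\varepsilon}\dotarg\tilde{\gamma}$ with $\Defo_{\varepsilon}=\diag(1,1,\varepsilon)$ and $\tilde{\gamma}$ a fixed smooth map on $[-\pi,\pi]$, one has $|\tilde{\gamma}_{\varepsilon}'(\tau)|^2 = |\tilde{\gamma}_{0}'(\tau)|^2 + \varepsilon^2 (\tilde{\gamma}_{3}'(\tau))^2$, hence $|a_{\varepsilon}'(\theta)-a'(\theta)| = \bigl||\tilde{\gamma}_{\varepsilon}'(\theta)|-|\tilde{\gamma}_{0}'(\theta)|\bigr| \le \varepsilon\,\|\tilde{\gamma}_{3}'\|_{\infty}\to 0$ uniformly in $\theta$. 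Integrating, $a_{\varepsilon}\to a$ uniformly on $[-\pi,\pi]$ and $L_{\varepsilon}=a_{\varepsilon}(\pi)\to L$; in particular there are constants $0<\ell\le L_{\max}<\infty$ with $\ell\le L_{\varepsilon}\le L_{\max}$ for all sufficiently small $\varepsilon$. Consequently, for each fixed $m$, $(2\pi/L_{\varepsilon})m\,a_{\varepsilon}(\theta)\to(2\pi/L)m\,a(\theta)$ uniformly in $\theta$, so the exponentials $e^{\pm i(2\pi/L_{\varepsilon})m\,a_{\varepsilon}(\theta)}$ converge uniformly in $\theta$, and $e^{-(2\pi/L_{\varepsilon})^2 m^2 t}\to e^{-(2\pi/L)^2 m^2 t}$.

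\emph{Coefficients and a uniform dominating sequence.} Combining the above with the boundedness of $\hat{u}_0$ (implicit in the well-posedness of the reference problem) and dominated convergence, $c_{\varepsilon,m}\to c_m$ as $\varepsilon\to 0$ for each fixed $m$, and moreover $|c_{\varepsilon,m}|\le \|\hat{u}_0\|_{\infty}$ for all $\varepsilon$ and $m$. For fixed $t>0$ and small $\varepsilon$ we have $e^{-(2\pi/L_{\varepsilon})^2 m^2 t}\le e^{-(2\pi/L_{\max})^2 m^2 t}=:\beta_m$ with $\sum_{m\in\Z}\beta_m<\infty$, so the $m$-th term of the series \eqref{eqn:RegSolParam} (and likewise of \eqref{eqn:SolOrigNonAL}) is dominated by $M_m := e^{-\mu^2 t}\|\hat{u}_0\|_{\infty}\,\beta_m$, uniformly in $\theta$ and in all small $\varepsilon$, with $\sum_m M_m<\infty$. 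Now the $\delta/3$ argument: given $\delta>0$, pick $N$ with $\sum_{|m|>N}M_m<\delta/3$; the tails $\sum_{|m|>N}$ of both series are then $\le\delta/3$ uniformly in $\theta$; and the finite partial sum $\sum_{|m|\le N}$ of $\tilde{v}_{\varepsilon}(t,\dotarg)$, being a finite sum of terms each converging uniformly in $\theta$ to the corresponding term of $\hat{u}(t,\dotarg)$, is within $\delta/3$ of it for $\varepsilon$ small. Adding the three contributions gives $\|\tilde{v}_{\varepsilon}(t,\dotarg)-\hat{u}(t,\dotarg)\|_{\infty}<\delta$ for all sufficiently small $\varepsilon$.

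The genuinely delicate point is that the endpoints $\theta=\pm\pi$ correspond to the cusp of $S$, where $a'(\pm\pi)=0$ and the limiting parametrization degenerates; but since $\Defo_{\varepsilon}$ acts linearly on the fixed smooth curve $\tilde{\gamma}$, the quantities $a_{\varepsilon}, a_{\varepsilon}', L_{\varepsilon}$ still depend on $\varepsilon$ uniformly in $\theta$, so this causes no real trouble. The essential structural observation, and the only place positivity of $t$ is used, is that $t>0$ forces Gaussian decay in $m$ that is uniform in $\varepsilon$ (this is where $\ell\le L_{\varepsilon}\le L_{\max}$ enters), so that the two limits $\varepsilon\to 0$ and summation over $m$ may be interchanged; without the heat smoothing the Fourier coefficients alone need not even be summable, and the interchange could fail.
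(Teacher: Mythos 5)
Your proposal is correct and follows essentially the same route as the paper's proof: uniform control of the arclength data $a_\varepsilon, a_\varepsilon', L_\varepsilon$ coming from the linear deformation $\Defo_{\varepsilon}$, followed by the tails-plus-finite-part ($\delta/3$) decomposition in which the positivity of $t$ supplies Gaussian decay in $m$ uniformly in $\varepsilon$. The only difference is cosmetic: you argue qualitatively (dominated convergence, continuity at $\varepsilon=0$) where the paper tracks explicit $\Order(\varepsilon)$ rates for the finitely many retained modes, which it later reuses when discussing the observed convergence rate in $\varepsilon$.
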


\begin{proof}
	We begin by estimating the difference $a'_{\varepsilon}(\theta) - a'(\theta)$, where $a_{\varepsilon}(\theta)$ and $a_0(\theta)=a(\theta)$ are the arclength functions
	from \eqref{eqn:AL2} and \eqref{eqn:AL1} respectively.
	\begin{equation}\label{eqn:ALdiff}
		0 \leq a'_{\varepsilon}(\theta) - a'(\theta) = a'_{\varepsilon}(\theta) - a_0'(\theta) = |\tilde{\gamma}'_{\varepsilon}(\theta)| - |\tilde{\gamma}'_0(\theta)| \leq
		|\tilde{\gamma}'_{\varepsilon}(\theta) - \tilde{\gamma}'_0(\theta)| = \frac{|\varepsilon  \cos(\theta)|}{2} \leq  \frac{\varepsilon}{2} \;.
	\end{equation}
	This estimate implies directly 
	\begin{equation}\label{eqn:estimAL}
		\begin{aligned}
			& a'_{\varepsilon}(\theta) - a'(\theta) = \Order(\varepsilon) \; &&\text{uniformly in} \; \theta \;,\\
			& a_{\varepsilon}(\theta) - a(\theta) = \int\limits_{-\pi}^{\theta} a'_{\varepsilon}(\tau) - a'(\tau) \; d\tau = \Order(\varepsilon) \; &&\text{uniformly in} \; \theta \;, \\
			& L_{\varepsilon}-L = a_{\varepsilon}(\pi) - a(\pi) = \Order(\varepsilon) \;.
		\end{aligned}
	\end{equation}
	From \eqref{eqn:estimAL} we derive further implications which are dependent upon $m$ but are uniform in $\theta$:
	\begin{equation}\label{eqn:estimALrat}
		\begin{aligned}
			& \frac{2\pi m}{L_{\varepsilon}} = \frac{2\pi m}{L} (1+\Order(\varepsilon)) & \Rightarrow \qquad
			& 0 \geq \left(\frac{2\pi m}{L_{\varepsilon}}\right)^2 - \left(\frac{2\pi m}{L}\right)^2 =  \Order(m^2\varepsilon) \;, \\
			& \frac{2\pi m a_{\varepsilon}(\theta)}{L_{\varepsilon}} = \frac{2\pi m a(\theta)}{L} (1+\Order(\varepsilon)) & \Rightarrow \qquad
			& \frac{2\pi m a_{\varepsilon}(\theta)}{L_{\varepsilon}} - \frac{2\pi m a(\theta)}{L} = \Order(m \varepsilon) \;.
		\end{aligned}
	\end{equation}
	The quantities defined below will be used later on
	\begin{equation}
		\begin{aligned}
			\alpha_{\varepsilon,m} &:= \exp \left( -\left( \left(\frac{2\pi m}{L_{\varepsilon}}\right)^2 - \left(\frac{2\pi m}{L}\right)^2 \right) t \right) \;, &
			\delta_{\varepsilon,m}(\theta) &:= \exp \left( i \left( \frac{2\pi m a_{\varepsilon}(\theta)}{L_{\varepsilon}} - \frac{2\pi m a(\theta)}{L} \right) \right) \;,
		\end{aligned}
	\end{equation}
	but we estimate them now. Using the results of \eqref{eqn:estimALrat} we see that $\alpha_{\varepsilon,m} = 1+ \Order(m^2\varepsilon)$
	and that $\delta_{\varepsilon,m}(\theta) = 1+ \Order(m\varepsilon)$ for arbitrary but fixed $m \in \Z$ as $\varepsilon \to 0$.
	Hence, if $M \in \N$ denotes an arbitrary but fixed bound on the indices $m$, we can say that
	\begin{equation}\label{eqn:estimUniform}
		\begin{aligned}
			\alpha_{\varepsilon,m} &= 1+ \Order(\varepsilon) \;,\quad \text{for all} \quad |m| <= M \\
			\delta_{\varepsilon,m}(\theta) &= 1+ \Order(\varepsilon) \;,\quad \text{for all} \quad |m| <= M \;,\; \text{uniformly in} \; \theta.
		\end{aligned}
	\end{equation}
	Furthermore, since $\delta_{\varepsilon,m}$ lies on the complex unit circle, we have
	\begin{equation}\label{eqn:Unit}
		|\delta_{\varepsilon,m}(\theta) - 1| \leq 2 \;,\quad \text{for all} \quad m \in \Z \;,\; \text{uniformly in} \; \theta.
	\end{equation}
	We now proceed by examining the difference in the coefficients $c_{\varepsilon,m}$ given by \eqref{eqn:RegSolParam} and $c_m$ given by \eqref{eqn:SolOrigNonAL}:
	\begin{equation}
		c_{\varepsilon,m} - c_m =  \int\limits_{-\pi}^{\pi}  \hat{u}_0 (\theta) \;
		\left( e^{-i \left(\frac{2\pi}{L_{\varepsilon}}\right) m \, a_{\varepsilon}(\theta)} \; \frac{a_{\varepsilon}'(\theta)}{L_{\varepsilon}} -
			e^{-i \left(\frac{2\pi}{L}\right) m \, a(\theta)} \; \frac{a'(\theta)}{L}
		\right) \; d\theta \;.
	\end{equation}
	Estimating the modulus $|c_{\varepsilon,m} - c_m|$, we arrive at the following bound:
	\begin{equation}
		\begin{aligned}
			|c_{\varepsilon,m} - c_m| & \leq \int\limits_{-\pi}^{\pi}  |\hat{u}_0 (\theta)| \left| \left( \frac{a'(\theta)}{L} + \Order(\varepsilon)\right) - \delta_{\varepsilon,m}(\theta) \frac{a'(\theta)}{L} \right| \; d\theta \\ 
			& \leq \frac{\|\hat{u}_0\|_{\infty} \; \| a'\|_{\infty}}{L} \int\limits_{-\pi}^{\pi}  \left| 1  - \delta_{\varepsilon,m}(\theta) \right|  \; d\theta + \Order(\varepsilon) \;.
		\end{aligned}
	\end{equation}
	Using \eqref{eqn:Unit} we observe that there is a bound $B$ independent of $m$ such that $|c_{\varepsilon,m} - c_m| \leq B$. 
	Moreover, the same bound holds for $|c_m|$ and $|c_{\varepsilon,m}|$, specifically $|c_m| \leq B$ and $|c_{\varepsilon,m}| \leq B$.
	Using the estimate \eqref{eqn:estimUniform} with our bound on $m$, we also find that
	\begin{equation}
		|c_{\varepsilon,m} - c_m| = \Order(\varepsilon) \;,\quad \text{for all} \quad |m| <= M \;.
	\end{equation}
	Next, we estimate the difference $|\tilde{v}_{\varepsilon}(t,\theta)-\hat{u}(t,\theta)|$.
	Since $t > 0$ is fixed the factors $e^{-\left(\frac{2\pi}{L_{\varepsilon}}\right)^2 m^2 t}$ and $e^{-\left(\frac{2\pi}{L}\right)^2 m^2 t}$ decrease rapidly with increasing $|m|$.
	Hence for an arbitrary but fixed $\delta > 0$, we can choose $M \in \N$ large enough so that the tails of $|\tilde{v}_{\varepsilon}(t,\theta)|$ are small:
	\begin{equation}
		\begin{aligned}
			& \sum\limits_{m=M+1}^{\infty}  |c_{\varepsilon,m} \; e^{-\left(\frac{2\pi}{L_{\varepsilon}}\right)^2 m^2 t} \; e^{i \left(\frac{2\pi}{L_{\varepsilon}}\right) m \, a_{\varepsilon}(\theta)} |
			+ \sum\limits_{m=-\infty}^{-M-1}  |c_{\varepsilon,m} \; e^{-\left(\frac{2\pi}{L_{\varepsilon}}\right)^2 m^2 t} \; e^{i \left(\frac{2\pi}{L_{\varepsilon}}\right) m \, a_{\varepsilon}(\theta)} | \\
			& \leq \sum\limits_{m=M+1}^{\infty}  B \; e^{-\left(\frac{2\pi}{L_{\varepsilon}}\right)^2 m^2 t} + \sum\limits_{m=-\infty}^{-M-1} B \; e^{-\left(\frac{2\pi}{L_{\varepsilon}}\right)^2 m^2 t}
			= 2B \sum\limits_{m=M+1}^{\infty} e^{-\left(\frac{2\pi}{L_{\varepsilon}}\right)^2 m^2 t} \leq \frac{\delta}{3}.
		\end{aligned}
	\end{equation}
	A similar estimate holds for the tails of $\hat{u}(t,\theta)$.
	We note that the choice of $M$ depends also on $t$. Because $e^{-\left(\frac{2\pi}{L_{\varepsilon}}\right)^2 m^2 t}$ is monotonically decreasing in $m$ we can apply a standard integral test
	\begin{equation}
		\begin{aligned}
			\sum\limits_{m=M+1}^{\infty} e^{-\left(\frac{2\pi}{L_{\varepsilon}}\right)^2 m^2 t} & \leq \int\limits_M^{\infty} e^{-\left(\frac{2\pi}{L_{\varepsilon}}\right)^2 z^2 t} \; dz
			\leq \frac{L_{\varepsilon}}{2\pi \sqrt{2t}} \left( \sqrt{\frac{\pi}{2}} - \int\limits_0^{M} e^{-\frac{z^2}{2}} \; dz\right)
		\end{aligned}
	\end{equation}
	to obtain a suitable $M$. Now, using the estimates of the tails, we have:
	\begin{equation}
		\begin{aligned}
			|\tilde{v}_{\varepsilon}(t,\theta)-\hat{u}(t,\theta)| & \leq \sum\limits_{m=-M}^M
			\left| c_{\varepsilon,m} \; e^{-\left(\frac{2\pi}{L_{\varepsilon}}\right)^2 m^2 t} \; e^{i \left(\frac{2\pi}{L_{\varepsilon}}\right) m \, a_{\varepsilon}(\theta)}
			- c_{m} \; e^{-\left(\frac{2\pi}{L}\right)^2 m^2 t} \; e^{i \left(\frac{2\pi}{L}\right) m \, a(\theta)} \right| + \frac{2}{3} \delta \;.
		\end{aligned}
	\end{equation}
	Rearranging the remaining summands, we arrive at the following inequality:
	\begin{equation}
		\begin{aligned}
			|\tilde{v}_{\varepsilon}(t,\theta)-\hat{u}(t,\theta)| & \leq \sum\limits_{m=-M}^M
			\left\{ |c_{\varepsilon,m} - c_m| \alpha_{\varepsilon,m} + |c_m| | \alpha_{\varepsilon,m}-\delta_{\varepsilon,m}(\theta)| \right\} \; e^{-\left(\frac{2\pi}{L}\right)^2 m^2 t}  + \frac{2}{3} \delta \;.
		\end{aligned}
	\end{equation}
	Since the indices $m$ are between $-M$ and $M$, our bounds on $\alpha_{\varepsilon,m}$ and $\delta_{\varepsilon,m}(\theta)$ as of \eqref{eqn:estimUniform} apply.
	This gives us the following estimate which is uniform with respect to $\theta$
	\begin{equation}\label{eqn:estimFin}
		\begin{aligned}
			|\tilde{v}_{\varepsilon}(t,\theta)-\hat{u}(t,\theta)| & \leq
			\left\{ \Order(\varepsilon) (1+\Order(\varepsilon)) + B \Order(\varepsilon) \right\} \; (2M+1)  + \frac{2}{3} \delta \leq \Order(\varepsilon) + \frac{2}{3} \delta \;.
		\end{aligned}
	\end{equation}
	Finally, we choose $\varepsilon$ small enough such that the $\Order(\varepsilon)$-expression in \eqref{eqn:estimFin} is lower than $\delta/3$. This implies eventually
	\begin{equation}
		\|\tilde{v}_{\varepsilon}(t,\dotarg)-\hat{u}(t,\dotarg)\|_{\infty} \leq \delta \;,
	\end{equation}
	which proves the statement as $\delta > 0$ was arbitrary. \qed
\end{proof}

We now prove our main convergence result.

\begin{theorem}
	Let $u_{\varepsilon}(t, \dotarg)$ be given by \eqref{eqn:AppoxSol}, i.e., as the pull-down of the solution $v_{\varepsilon}(t, \dotarg)$
	of the regularized problem (equations~\eqref{eqn:RegularizedPDE}--\eqref{eqn:RegularizedIC}) at time $t$.
	Let $u(t,\dotarg)$ be the solution of the reference problem (equations~\eqref{eqn:RefProbPDE1}--\eqref{eqn:RefProbIC}) at time $t$.
	Then, for any fixed time $t>0$, $u_{\varepsilon}(t, \dotarg)$ converges uniformly to $u(t,\dotarg)$ as $\varepsilon \to 0$.
\end{theorem}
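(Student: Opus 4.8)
The plan is to deduce the theorem directly from Lemma~\ref{lem:FourierConvergence} by exploiting the fact that the pull-down construction \eqref{eqn:AppoxSol} is compatible with the parametrizations used to define the parametric solutions. The key observation I would record first is the identity
\[
	\Bigl( \gamma_1(\theta),\, \gamma_2(\theta),\, \varepsilon\,\tfrac{\gamma_2(\theta)}{\gamma_1(\theta)} \Bigr) = \tilde{\gamma}_{\varepsilon}(\theta), \qquad \theta \in (-\pi,\pi),
\]
where $\gamma(\theta) = \bigl(\tfrac12(1+\cos\theta),\, \tfrac14(1+\cos\theta)\sin\theta\bigr)$ is the regular parametrization of $S\setminus\{0\}$ and $\tilde{\gamma}_{\varepsilon} = \Defo_{\varepsilon}\tilde{\gamma}$ is the parametrization of $\tilde{S}_{\varepsilon}$ from \eqref{eqn:BlowUpCurve}. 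This holds because $\gamma_1(\theta) > 0$ on $(-\pi,\pi)$, so the blow-up map \eqref{eqn:BlowUpMap} takes the form $(x,y)\mapsto(x,y,\varepsilon y/x)$ there, and because $\varepsilon\,\gamma_2(\theta)/\gamma_1(\theta) = \tfrac{\varepsilon}{2}\sin\theta$ is exactly the third component of $\Defo_{\varepsilon}\tilde{\gamma}(\theta)$. A one-line substitution also confirms that $(x,y,\varepsilon y/x)\in\tilde{S}_{\varepsilon}$ for every $(x,y)\in S$ with $x\neq 0$, so $u_{\varepsilon}$ is well defined on $S\setminus\{0\}$.

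Granting this, I would argue as follows. Combining the identity above with the definitions $u_{\varepsilon}(t,x,y) := v_{\varepsilon}(t,x,y,\varepsilon y/x)$, $\tilde{v}_{\varepsilon}(t,\theta) := v_{\varepsilon}(t,\tilde{\gamma}_{\varepsilon}(\theta))$ and $\hat{u}(t,\theta) := u(t,\gamma(\theta))$ yields, for each $\theta\in(-\pi,\pi)$,
\[
	u_{\varepsilon}\bigl(t,\gamma(\theta)\bigr) = \tilde{v}_{\varepsilon}(t,\theta), \qquad u\bigl(t,\gamma(\theta)\bigr) = \hat{u}(t,\theta).
\]
Since $\gamma$ maps $(-\pi,\pi)$ bijectively onto $S\setminus\{0\}$, taking suprema gives
\[
	\sup_{(x,y)\in S\setminus\{0\}} \bigl| u_{\varepsilon}(t,x,y) - u(t,x,y) \bigr| = \sup_{\theta\in(-\pi,\pi)} \bigl| \tilde{v}_{\varepsilon}(t,\theta) - \hat{u}(t,\theta) \bigr| = \| \tilde{v}_{\varepsilon}(t,\dotarg) - \hat{u}(t,\dotarg) \|_{\infty},
\]
and the right-hand side tends to $0$ as $\varepsilon\to 0$ by Lemma~\ref{lem:FourierConvergence}. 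This already establishes uniform convergence on the smooth part $S\setminus\{0\}$.

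To phrase the statement on all of $S$ I would finally dispose of the single excluded point. The function $u_{\varepsilon}(t,\dotarg)$ extends continuously to the origin via $u_{\varepsilon}(t,0) := v_{\varepsilon}(t,0,0,0)$, which is the common value of $\lim_{\theta\to\pi^-}\tilde{v}_{\varepsilon}(t,\theta)$ and $\lim_{\theta\to-\pi^+}\tilde{v}_{\varepsilon}(t,\theta)$ because $\tilde{\gamma}_{\varepsilon}(\pm\pi) = (0,0,0)$ and $v_{\varepsilon}(t,\dotarg)$ is continuous on the smooth closed curve $\tilde{S}_{\varepsilon}$; likewise $u(t,\dotarg)$ extends continuously to the origin by the boundary condition \eqref{eqn:RefProbBC1}. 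As $S\setminus\{0\}$ is dense in $S$ and both extended functions are continuous, the supremum over $S\setminus\{0\}$ equals the supremum over $S$, and uniform convergence holds on $S$. The computations here are all routine, with Lemma~\ref{lem:FourierConvergence} supplying the only analytic substance, so there is no real obstacle; the one point that demands care is precisely this bookkeeping at $\theta=\pm\pi$, i.e.\ verifying that both $u_{\varepsilon}(t,\dotarg)$ and $u(t,\dotarg)$ genuinely extend to continuous functions on the closed curve $S$ rather than merely on $S\setminus\{0\}$.
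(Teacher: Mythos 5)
Your proposal is correct and follows essentially the same route as the paper: both rest on the identity $(\gamma_1,\gamma_2,\varepsilon\gamma_2/\gamma_1)=\tilde{\gamma}_{\varepsilon}$, which turns the pull-down $u_{\varepsilon}\circ\gamma$ into the parametric solution $\tilde{v}_{\varepsilon}$, after which Lemma~\ref{lem:FourierConvergence} and the regularity of $\gamma$ on $S\setminus\{0\}$ finish the argument. Your closing remark about extending continuously to the origin is a harmless extra (the theorem, via \eqref{eqn:AppoxSol}, only concerns $S\setminus\{0\}$), not a different method.
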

\begin{proof}
	We use the parametrizations $\gamma$ and $\tilde{\gamma}_{\varepsilon}$ from Section~\ref{sect:SingularityResolution}. 
	By construction, the first two components are equal $\tilde{\gamma}_{\varepsilon,i} = \gamma_i$, $i \in \{1,2\}$, while the third component $\tilde{\gamma}_{\varepsilon,3}$
	is equal to
	\begin{equation}
		\tilde{\gamma}_{\varepsilon,3} = \varepsilon \frac{\gamma_2}{\gamma_1} \;.
	\end{equation}
	If we parametrize $u_{\varepsilon}(t, \dotarg)$ by $\gamma$ then, using our convention $\hat{u}_{\varepsilon}(t,\theta) := u_{\varepsilon}(t, \gamma(\theta))$, we obtain
	\begin{equation}
		\hat{u}_{\varepsilon}(t, \dotarg) = u_{\varepsilon}(t, \gamma) = v_{\varepsilon}\left(t, \gamma_1, \gamma_2,  \varepsilon \frac{\gamma_2}{\gamma_1} \right) =
		v_{\varepsilon}\left(t, \tilde{\gamma}_{\varepsilon,1}, \tilde{\gamma}_{\varepsilon,2}, \tilde{\gamma}_{\varepsilon,3} \right) =
		v_{\varepsilon}\left(t, \tilde{\gamma}_{\varepsilon} \right) = \tilde{v}_{\varepsilon}(t, \dotarg) \;.
	\end{equation}
	Now, we appeal to Lemma~\ref{lem:FourierConvergence} to obtain
	\begin{equation}\label{eqn:LenCons}
		u_{\varepsilon}(t, \gamma) = \hat{u}_{\varepsilon}(t, \dotarg) \to \hat{u}(t, \dotarg) = u(t, \gamma) \quad \text{as} \quad \varepsilon \to 0 \;,
	\end{equation}
	where $\hat{u}(t, \dotarg) = u(t, \gamma)$ is the parametric solution of the reference problem.
	Since $\gamma$ is a regular parametrization of $S \wo \{0\}$ the statement follows from \eqref{eqn:LenCons}. \qed
\end{proof}

\subsection{Experimental Convergence Rate $(\varepsilon \to 0)$}\label{sect:ExpConv}
Thus far we have shown that $u_{\varepsilon}(t,\cdot)$ converges uniformly in space and pointwise in time to $u(t,\cdot)$ as $\varepsilon \rightarrow 0$.
For the purposes of developing a numerical method in later sections, we are interested in the rate at which $u_{\varepsilon}(t,\cdot)$ converges to $u(t,\cdot)$.
We investigate this by obtaining highly accurate approximations to $a(\theta)$ and $c_m$ (as defined in \eqref{eqn:SolOrigNonAL}) as well as $a_{\varepsilon}(\theta)$ and $c_{\varepsilon,m}$
(as defined in \eqref{eqn:RegSolParam}) by using Chebfun \cite{chebfunv4} in MATLAB. We then employ the first equation of \eqref{eqn:SolOrigNonAL} and the first equation of \eqref{eqn:RegSolParam}
where we restrict the summation to $-M \leq m \leq M$ with $M$ chosen sufficiently large such that the bound $e^{-\omega^2 M^2 t} \|\hat{u}_0\|_{\infty}$ is lower than machine accuracy.
We then compute the $l^\infty$-norm of the difference between $\hat{u}_{\varepsilon}(t,\cdot)$ and $\hat{u}(t,\cdot)$.
For our numerical experiments, we set $\mu = 1$ in \eqref{eqn:RegSolParam} and \eqref{eqn:ArclengthSolution}, and choose the initial conditions:
\begin{equation}\label{eqn:InitialCondition}
	\hat{u}_0(t,\theta) = \frac{\exp(4\cos^2(\theta))}{50}, \qquad \theta \in [-\pi,\pi).
\end{equation}

\begin{figure}[tbp]
	\begin{minipage}{.475\textwidth}
		\centering
		\includegraphics[scale=.4]{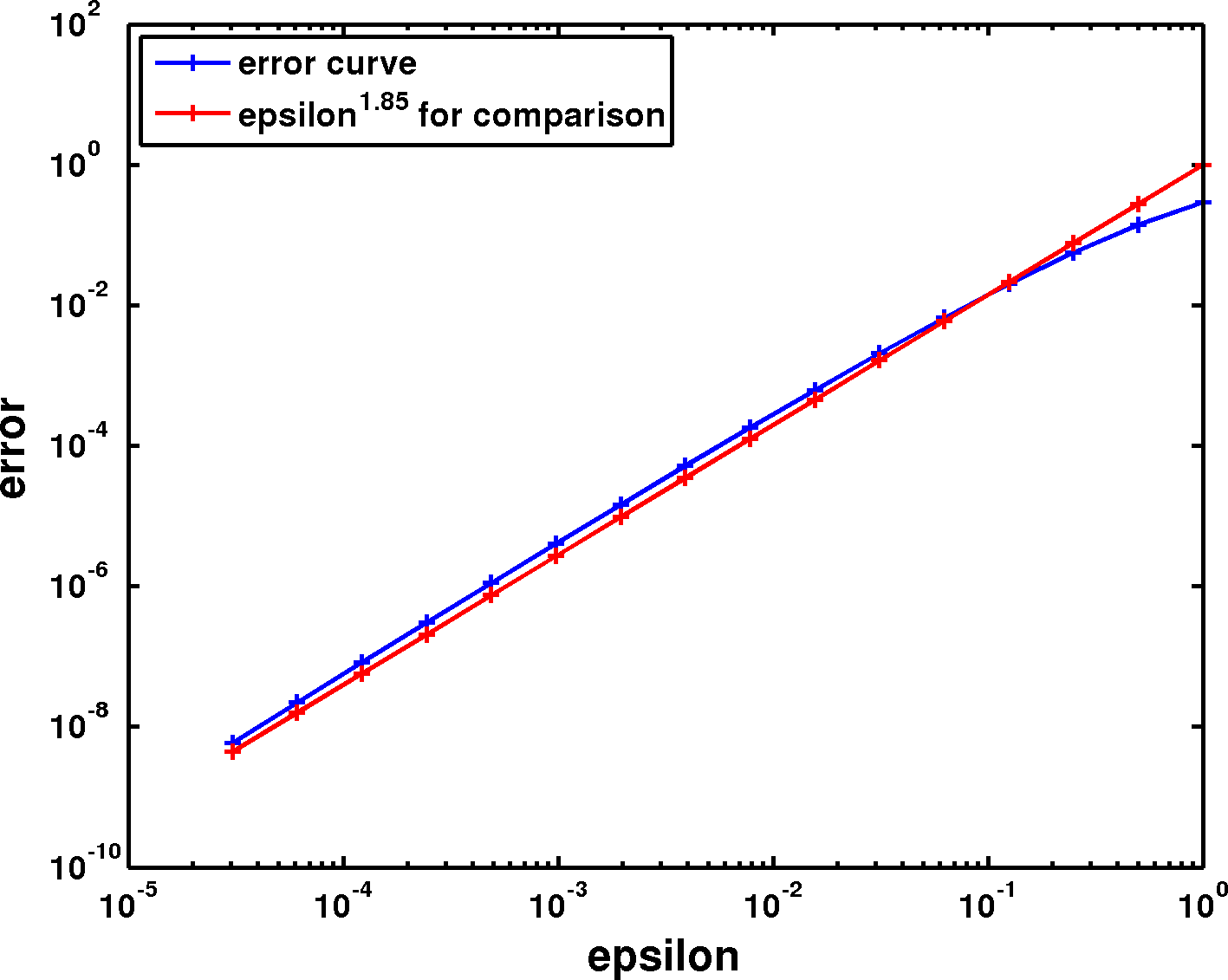}

		\scriptsize{a) Convergence of the solution of the heat equation for various choices of $\varepsilon$ for final time $t_f = .1$}
	\end{minipage}
	\hfill
	\begin{minipage}{.475\textwidth}
		\centering
		\includegraphics[scale=.4]{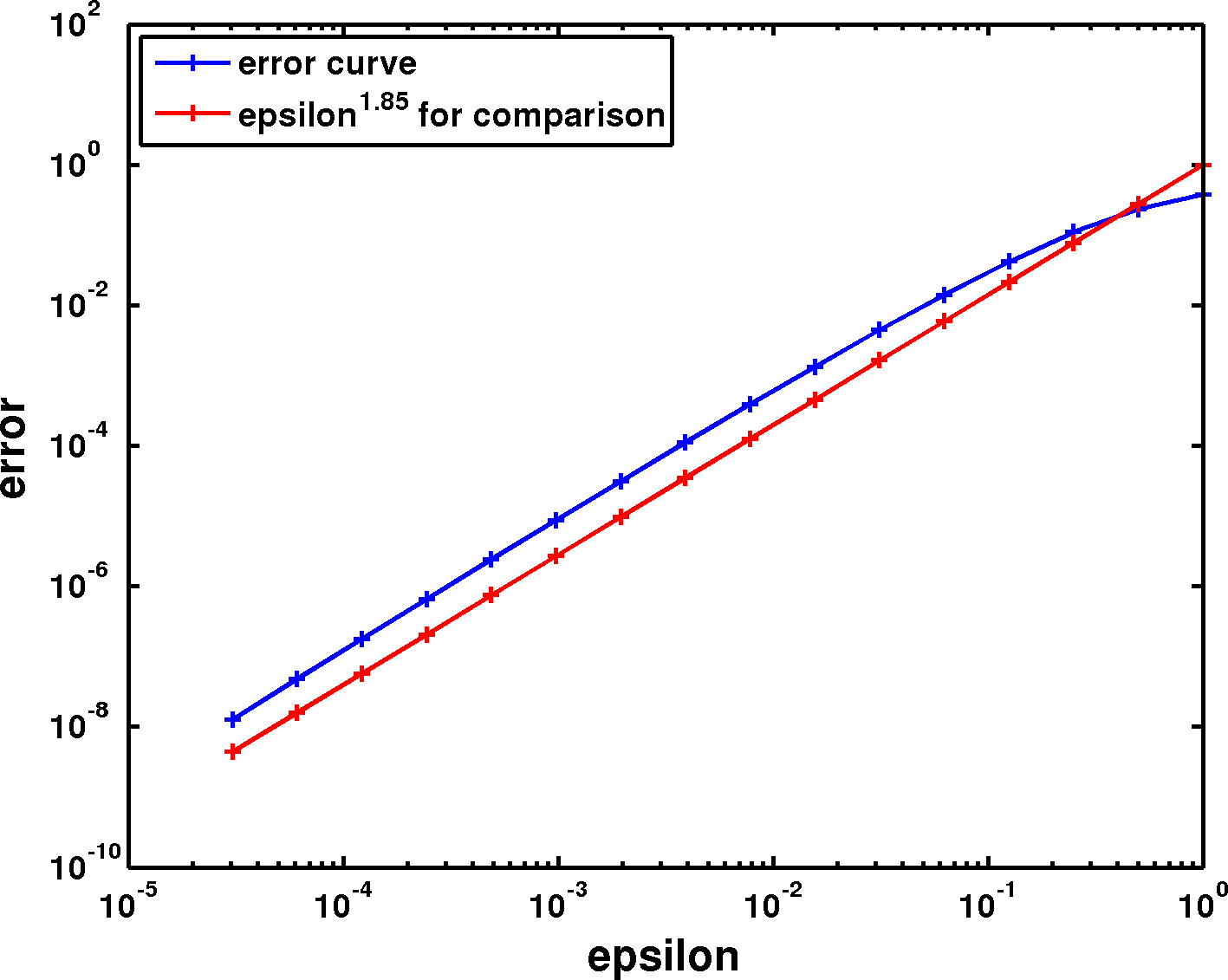}

		\scriptsize{b) Convergence of the solution of the heat equation for various choices of $\varepsilon$ for final time $t_f = .01$}
	\end{minipage}

	\begin{minipage}{.475\textwidth}
		\centering
		\includegraphics[scale=.4]{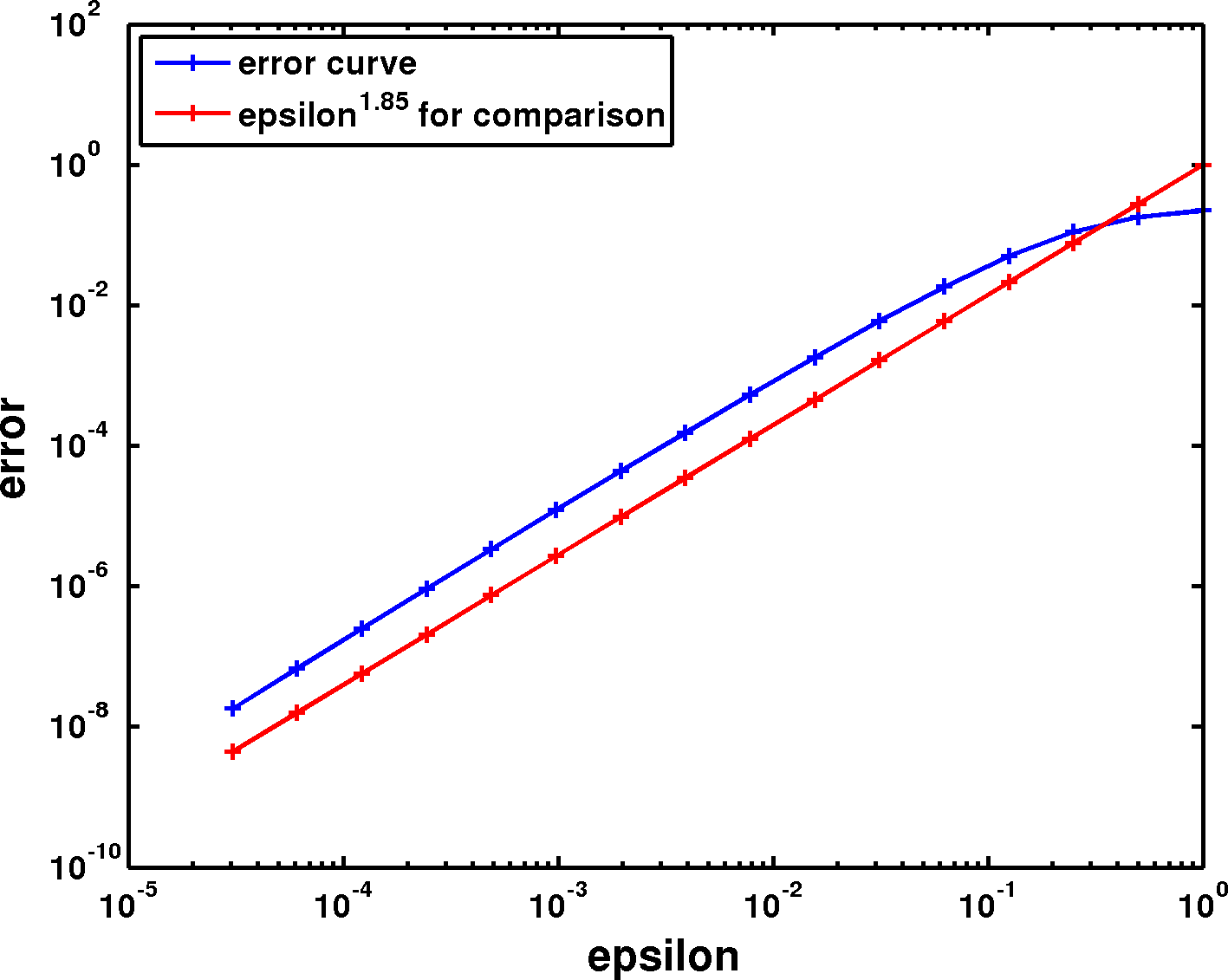}

		\scriptsize{c) Convergence of the solution of the heat equation for various choices of $\varepsilon$ for final time $t_f = .001$}
	\end{minipage}
	\hfill
	\begin{minipage}{.475\textwidth}
		\centering
		\includegraphics[scale=.4]{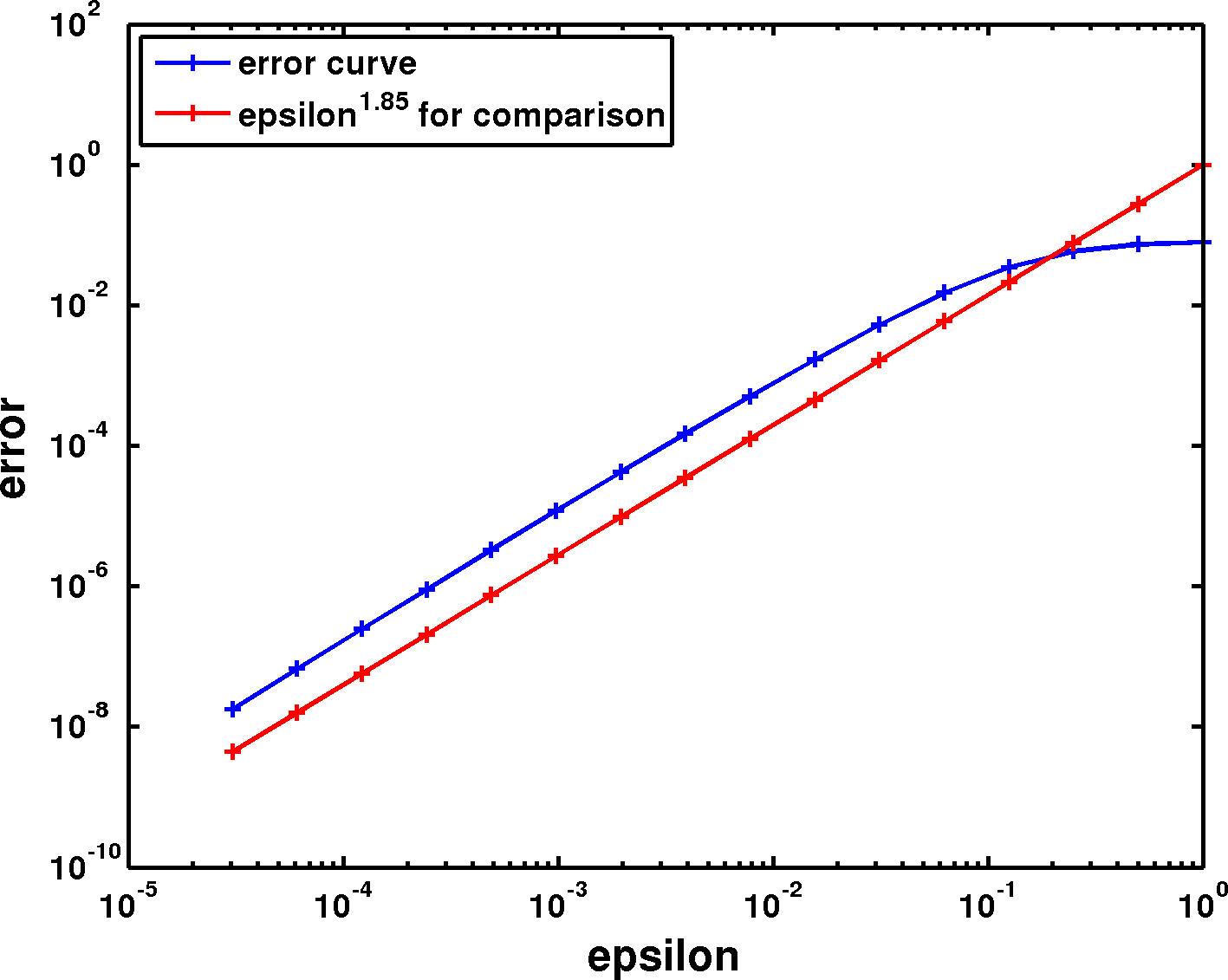}

		\scriptsize{d) Convergence of the solution of the heat equation for various choices of $\varepsilon$ for final time $t_f = .0001$}
	\end{minipage}
	\caption{Convergence results for various final times with various choices of $\varepsilon > 0$. The initial condition is given by \eqref{eqn:InitialCondition}.}
	\label{fig:FourierConv}
\end{figure}

We perform a regression analysis on the linear part of the data; see Figure~\ref{fig:FourierConv}.
Excluding the first six data points in the linear fit, we observe that the convergence rate of $\hat{u}_{\varepsilon}(t,\cdot)$ to $\hat{u}(t,\cdot)$ is $\varepsilon^{1.85}$ in each case.
However, we also note that in each case as the stop time $t_f$ decreases, the error curves in the double logarithmic plots of Figure~\ref{fig:FourierConv}
exhibit lower slopes for larger values of $\varepsilon$. 
In addition, we ran similar tests with other periodic initial conditions possessing the same order of smoothness.
In each case we found that the data exhibited similar convergence results with the convergence rate of $\hat{u}_{\varepsilon}(t,\cdot)$ to $\hat{u}(t,\cdot)$ ranging from $\varepsilon^{1.83}$ to $\varepsilon^{1.85}$.

\subsection{Transformation of the Regularized Problem}\label{sect:RegTrafo}
The regularized problem (equations \eqref{eqn:RegularizedPDE}--\eqref{eqn:RegularizedIC}) is a one-parameter family of problems.
In Section~\ref{sect:Numerics} these will be
solved for various choices of $\varepsilon > 0$ as $\varepsilon \to 0$ by an embedding technique.
However, as $\varepsilon \rightarrow 0$, the domain $\tilde{S}_{\varepsilon}$ approaches the singular domain $S$ which makes it
 difficult to solve the problem directly on the domains $\tilde{S}_{\varepsilon}$.
We resolve this issue by transplanting the problem onto a fixed curve $\tilde{S} = \tilde{S}_{1}$, in $\R^3$, 
and transforming the reaction-diffusion PDE to one involving variable coefficients.
To accomplish this task we introduce a new variable $w_{\varepsilon}$ via
the linear diffeomorphism induced by the matrix $\Defo_{\varepsilon}$ of \eqref{eqn:BlowUpCurve}:
\begin{equation}
	w_{\varepsilon}(t,\x) := v_{\varepsilon}(t, h_{\varepsilon}(\x) ) \;, \qquad h_{\varepsilon}:\tilde{S} \to \tilde{S}_{\varepsilon} \;, \; h_{\varepsilon}(\x) = \Defo_{\varepsilon} \cdot \x \;.
\end{equation}
The pull-down given in \eqref{eqn:AppoxSol} transforms accordingly to:
\begin{equation}
	u_{\varepsilon}(t,x,y) = w_{\varepsilon}\left(t,x,y,\frac{y}{x}\right) \;, \quad t \in (0,\infty) \;,\; (x,y) \in S \wo \{0\} \;.
\end{equation}
Now, Lemma~\ref{lem:DiffeoPDESwitch} below, tells us how to transform \eqref{eqn:RegularizedPDE} and \eqref{eqn:RegularizedIC} into a problem involving $w_{\varepsilon}$.

\begin{lemma} \label{lem:DiffeoPDESwitch}
	Let $\Gamma$ and $\tilde{\Gamma}$ be two smooth curves embedded in $\mathbb{R}^{n}$
	which are diffeomorphic to each other under the linear diffeomorphism $h:\tilde{\Gamma} \rightarrow \Gamma$, $h(\x)= \Defo \dotarg \x$.
	Let $u:\Gamma \rightarrow \R$ be a smooth function on $\Gamma$ and let $\tilde{u}:\tilde{\Gamma} \rightarrow \R$ denote its pull-back $\tilde{u} = u \circ h$ onto $\tilde{\Gamma}$.
	Then the pull-back of the Laplace-Beltrami operator $\laplace_{\Gamma} u \circ h$ onto $\tilde{\Gamma}$ transforms the following differential operator on $\tilde{\Gamma}$
	\begin{equation}\label{eqn:VariableGeometricPDE}
		\laplace_{\Gamma} u \circ h = \frac{1}{|\Defo \; \tilde{\T}|} \diver_{\tilde{\Gamma}} \left(\frac{1}{| \Defo \; \tilde{\T}|}\nabla_{\tilde{\Gamma}}\tilde{u}\right) \;,
	\end{equation}
	acting on $\tilde{u}$.
	In \eqref{eqn:VariableGeometricPDE} $\tilde{\T}$ is (one of the two possible choices of) the unit tangent vector field of $\tilde{\Gamma}$.
\end{lemma}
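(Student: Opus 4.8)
The plan is to compute the pull-back of the Laplace--Beltrami operator directly from the intrinsic definitions in Definition~\ref{def:DiffOps} and Definition~\ref{def:Lap}, using that the curves are one-dimensional so that the tangent spaces are spanned by single unit vectors. For a curve $\Gamma$ with unit tangent field $\T$, the orthogonal projector onto $\Tang_{\x}\Gamma$ is simply $\Proj(\x) = \T(\x)\,\T(\x)^T$, and the surface gradient of a scalar $u$ is $\nabla_{\Gamma}u = (\pd_{\T}u)\,\T$, where $\pd_{\T}u = \T^T\nabla u_E$. Likewise the surface divergence of a tangential field $g = \psi\,\T$ is $\diver_{\Gamma}g = \pd_{\T}\psi + \psi\,\diver_{\Gamma}\T$. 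Hence $\laplace_{\Gamma}u = \pd_{\T}(\pd_{\T}u) + (\pd_{\T}u)\,\diver_{\Gamma}\T$. The cleanest route, though, is to reparametrize by arc length: along an arc-length parametrization $\sigma \mapsto c(\sigma)$ one has $\T = c'$ and $\laplace_{\Gamma}u = \pd_{\sigma}^2 (u\circ c)$, i.e. the Laplace--Beltrami operator on a curve is just the second derivative with respect to its own arc length. This reduces the whole lemma to a one-variable change-of-variables computation.

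First I would fix a common parameter $\theta$ for both curves: let $\theta\mapsto\tilde{c}(\theta)$ parametrize $\tilde{\Gamma}$, so that $\theta\mapsto c(\theta) := h(\tilde{c}(\theta)) = \Defo\,\tilde{c}(\theta)$ parametrizes $\Gamma$, and write $\hat{u}(\theta) = u(c(\theta)) = \tilde{u}(\tilde{c}(\theta))$. The arc-length elements are related by $c'(\theta) = \Defo\,\tilde{c}'(\theta)$, hence $|c'(\theta)| = |\Defo\,\tilde{c}'(\theta)| = |\Defo\,\tilde{\T}|\cdot|\tilde{c}'(\theta)|$, since $\tilde{\T} = \tilde{c}'/|\tilde{c}'|$. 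Writing $ds = |c'(\theta)|\,d\theta$ for arc length on $\Gamma$ and $d\tilde{s} = |\tilde{c}'(\theta)|\,d\theta$ for arc length on $\tilde{\Gamma}$, this says $ds = |\Defo\,\tilde{\T}|\,d\tilde{s}$, i.e. $\pd_s = \frac{1}{|\Defo\,\tilde{\T}|}\pd_{\tilde{s}}$ as operators on functions reparametrized by the respective arc lengths. Then, using $\laplace_{\Gamma}u = \pd_s^2\hat{u}$ (arc length on $\Gamma$) and $\pd_{\tilde{\Gamma}}$-calculus on $\tilde{\Gamma}$, I compute
\begin{equation*}
	\laplace_{\Gamma}u \circ h = \pd_s^2\hat{u} = \frac{1}{|\Defo\,\tilde{\T}|}\pd_{\tilde{s}}\!\left(\frac{1}{|\Defo\,\tilde{\T}|}\pd_{\tilde{s}}\hat{u}\right) = \frac{1}{|\Defo\,\tilde{\T}|}\diver_{\tilde{\Gamma}}\!\left(\frac{1}{|\Defo\,\tilde{\T}|}\nabla_{\tilde{\Gamma}}\tilde{u}\right),
\end{equation*}
where in the last equality I use that on a curve $\nabla_{\tilde{\Gamma}}\tilde{u} = (\pd_{\tilde{s}}\tilde{u})\,\tilde{\T}$ and $\diver_{\tilde{\Gamma}}(\psi\,\tilde{\T}) = \pd_{\tilde{s}}\psi$ when $\psi$ is expressed via arc length — equivalently, $\diver_{\tilde\Gamma}\tilde\T = 0$ in the arc-length frame. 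This is exactly \eqref{eqn:VariableGeometricPDE}.

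The step that needs the most care — and which I expect to be the main obstacle to a fully rigorous write-up rather than a formal one — is justifying the operator identity $\pd_s = \frac{1}{|\Defo\,\tilde{\T}|}\pd_{\tilde{s}}$ and, relatedly, the claim $\diver_{\tilde\Gamma}(\psi\tilde\T) = \pd_{\tilde s}\psi$ intrinsically, i.e. making sure the arc-length reparametrization manipulations agree with the extension-based definitions of $\nabla_S$, $D_S$, $\diver_S$ in Definition~\ref{def:DiffOps}. Concretely one must check that for a tangential vector field the surface divergence computed via a $C^1$ extension into $\R^n$ equals the $\theta$- (or $\tilde s$-) derivative of its tangential coefficient times the appropriate Jacobian factor, independent of the chosen extension; this is where the projector $\Proj = \tilde\T\tilde\T^T$ and the product rule for $\diver_{\tilde\Gamma}(\psi\,\tilde\T)$ must be invoked carefully, together with the fact that $|\tilde\T|\equiv 1$ forces $\tilde\T^T D_{\tilde\Gamma}\tilde\T\,\tilde\T = 0$. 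Once that bookkeeping is in place, the remaining computation is the routine one-variable chain rule sketched above. An alternative, slightly more computational route avoids arc length entirely: expand both sides in the common parameter $\theta$ using $\Proj$, $D\vphi$, and the identity $(\Defo\,\tilde c')/|\Defo\,\tilde c'|$ for the tangent of $\Gamma$, and verify the two second-order expressions coincide term by term; I would keep this in reserve as a fallback but expect the arc-length argument to be cleaner.
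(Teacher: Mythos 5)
Your proposal is correct, but it takes a genuinely different route from the paper. The paper never leaves the ambient space: it expands $\laplace_{\Gamma}u=\trace(D(\Proj\nabla u)\Proj)$ with $\Proj=\T\T^{T}$, reduces it to $\T^{T}D^{2}u\,\T+\nabla u^{T}D\T\,\T$ via $\T^{T}D\T=0$, and then pulls back term by term through $h$ using the chain rule for the linear map ($Dh=\Defo$, $D^{2}h=0$) together with the tangent transformation rule $\T\circ h=\Defo\tilde{\T}/|\Defo\tilde{\T}|$. You instead observe that on a curve the Laplace--Beltrami operator is the second derivative in arc length, relate the two arc lengths through a common parameter by $ds=|\Defo\tilde{\T}|\,d\tilde{s}$, and read off the conjugated operator $\frac{1}{|\Defo\tilde{\T}|}\pd_{\tilde{s}}\bigl(\frac{1}{|\Defo\tilde{\T}|}\pd_{\tilde{s}}\,\cdot\,\bigr)$. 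The bookkeeping you flag as the delicate step (reconciling arc-length calculus with the extension-based definitions, i.e.\ $\nabla_{\tilde{\Gamma}}\tilde{u}=(\pd_{\tilde{s}}\tilde{u})\tilde{\T}$, $\diver_{\tilde{\Gamma}}(\psi\tilde{\T})=\pd_{\tilde{s}}\psi$, and $\tilde{\T}^{T}D\tilde{\T}\,\tilde{\T}=0$) is exactly the algebra the paper carries out explicitly in equations \eqref{eqn:Laplace1}--\eqref{eqn:Laplace3}, so nothing is missing; once that is in place your argument closes. What each approach buys: yours is shorter and more transparent for curves, since the whole lemma collapses to a one-variable change of variables; the paper's ambient-space computation is heavier but is the template that generalizes to the two-dimensional surface case in Section~\ref{sect:SingSurf} (equation \eqref{eqn:RegZeckPDETrafo}), where there is no arc-length shortcut.
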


\begin{proof}
	According to Definitions~\ref{def:DiffOps} and \ref{def:Lap}, the Laplace-Beltrami operator on $\Gamma$ expands to
	\begin{equation}\label{eqn:Laplace1}
		\laplace_{\Gamma} u = \diver_{\Gamma}(\nabla_{\Gamma} u) = \trace( D \left(\Proj\nabla u \right) \Proj) \;,
	\end{equation}
	where $\Proj(\x)$ is the projector that projects onto the tangent space $\Tang_{\x} \Gamma$.
	The right-hand side of \eqref{eqn:Laplace1} involves a $C^1$-smooth local extension $u_E$ of $u$ (cf. Definition~\ref{def:DiffOps}),
	but we will simply use the identifier $u$ for both the function and its extension.
	Next, we expand the right-hand side of \eqref{eqn:Laplace1} by using the product rule:
	\begin{equation}\label{eqn:Laplace2}
		\laplace_{\Gamma} u = \trace \left(\Proj D^2 u \Proj + \left[ D(\Proj w) \right]|_{w=\nabla u} \cdot \Proj \right) \;.
	\end{equation}
	Since $\Gamma$ is a curve, the projector $\Proj(\x)$ is simply the outer product $\Proj(\x) = \T(\x) \cdot \T(\x)^T$. We use the product rule again
	to expand the Jacobian of the vector $\Proj w$ (where $w$ independent of $\x$)
	\begin{equation}
		D(\Proj w) = D (\T \T^T w) = D\T (\T^T w) + \T w^T D\T \;.
	\end{equation}
	With the last two results, the right-hand side of \eqref{eqn:Laplace2} becomes
	\begin{equation}
		\begin{aligned}
			\laplace_{\Gamma} u &= \trace \left(D^2 u \Proj \right) + \trace \left(D\T (\T^T \nabla u) \Proj \right) + \trace \left( \T(\nabla u^T D \T) \Proj \right) \\
			&= \T^T D^2 u \T +  (\T^T \nabla u) (\T^T D \T \T) + \nabla u^T D \T \T = \T^T D^2 u \T + \nabla u^T D \T \T \;. \label{eqn:Laplace3}
		\end{aligned}
	\end{equation}
	In the last equality we have used that $|\T|^2=1$ which implies $\T^T D\T = 0$.
	Now, we pull back:
	\begin{equation}\label{eqn:ResolvedLaplace}
		\laplace_{\Gamma} u \circ h = \T^{T}\circ h \; (D^2 u) \circ h \; \T \circ h + (\nabla u^{T}) \circ h \; D\T\circ h \; \T\circ h \;.
	\end{equation}
	The next step is to replace $(\nabla u) \circ h$ and $(D^2 u) \circ h$ with expressions involving derivatives of $\tilde{u} = u \circ h$.
	We obtain these expressions from the chain rule. Moreover, the linearity of $h$,
	\begin{equation}
		h(\x) = \Defo \; \x \;, \qquad Dh = \Defo \;, \qquad D^2 h = 0 \;,
	\end{equation}
	allows us to simplify in the following manner:
	\begin{align}
		\nabla \tilde{u}^{T} &= (\nabla u^{T}) \circ h \; Dh \qquad  &\Rightarrow \qquad \nabla u^{T} \circ h &= \nabla \tilde{u}^T \; \Defo^{-1}  \;, \label{eqn:GradU} \\
		D^2 \tilde{u} &= D h^{T} (D^2 u) \circ h \; Dh \qquad  &\Rightarrow \qquad D^2 u \circ h &= \Defo^{-T} \; D^2 \tilde{u} \; \Defo^{-1} . \label{eqn:D2U}
	\end{align}
	Finally, we must resolve $\T \circ h$. For our given $h$ a regular parametrization $\tilde{\gamma}$ of $\tilde{\Gamma}$ corresponds to a regular parametrization $\gamma$ of $\Gamma$
	via $\gamma = h(\tilde{\gamma})$. The linearity of $h$ and the fact that the unit tangent vector field has parametric form $\T \circ \gamma = \gamma'/|\gamma'|$ yield the following transformation rule:
	\begin{equation}\label{eqn:Ttrafo}
		\T \circ h(\tilde{\gamma})  = \T \circ \gamma = \frac{\gamma'}{|\gamma'|} = \frac{\Defo \; \tilde{\gamma}'}{|\Defo \; \tilde{\gamma}'|} =
		\frac{\Defo \; \tilde{\gamma}'/|\tilde{\gamma}'|}{|\Defo \; \tilde{\gamma}'|/|\tilde{\gamma}'|} = \frac{\Defo \; \tilde{\T} \circ \tilde{\gamma}}{|\Defo \; \tilde{\T} \circ \tilde{\gamma}|}
		\quad \Leftrightarrow \quad \T \circ h = \frac{\Defo \; \tilde{\T} }{|\Defo \; \tilde{\T} |} \;.
	\end{equation}
	Replacing the terms on the right-hand side of \eqref{eqn:ResolvedLaplace} with those of \eqref{eqn:GradU}, \eqref{eqn:D2U}, and \eqref{eqn:Ttrafo}, we arrive at
	\begin{equation}\label{eqn:ResolvedLaplace2}
		\laplace_{\Gamma} u \circ h = \frac{\tilde{\T}^T D^2 \tilde{u} \tilde{\T}}{|\Defo \; \tilde{\T}|^2} + \nabla \tilde{u}^{T} D \left(\frac{\tilde{\T}}{|\Defo \;\tilde{\T}|}\right) \frac{\tilde{\T}}{|\Defo \; \tilde{\T}|} \;.
	\end{equation}
	The right-hand side of \eqref{eqn:ResolvedLaplace2} involves only functions defined on $\tilde{\Gamma}$ and can be simplified further to:
	\begin{equation}
		\laplace_{\Gamma} u \circ h = \frac{1}{|\Defo \; \tilde{\T}|} \diver_{\tilde{\Gamma}} \left(\frac{1}{| \Defo \; \tilde{\T}|}\nabla_{\tilde{\Gamma}}\tilde{u}\right),
	\end{equation}
	thus completing the proof. \qed
\end{proof}

By Lemma~\ref{lem:DiffeoPDESwitch}, the regularized problem transforms to
\begin{subequations}
	\begin{align}
		& \pd_t w_{\varepsilon} = \beta_{\varepsilon} \diver_{\tilde{S}} \left( \beta_{\varepsilon} \nabla_{\tilde{S}} w_{\varepsilon} \right) - \mu^2 w_{\varepsilon} \;, \quad  t \in (0,\infty) \;,\; (x,y,z) \in \tilde{S}  \label{eqn:TPDE} \\
		& w_{\varepsilon}(0,x,y,z) = u_0(x,y)  \;,\quad (x,y,z) \in \tilde{S} \;, \label{eqn:TPDEIC}
	\end{align}
\end{subequations}
where the non-constant coefficient function $\beta_{\varepsilon}: \tilde{S} \to \R$ is given by
\begin{equation}
	\beta_{\varepsilon}(\x) = \frac{1}{|\Defo_{\varepsilon} \cdot \tilde{\T}(\x)|} \;,
\end{equation}
and $\tilde{\T}: \tilde{S} \to \R^3$ denotes the tangent field of $\tilde{S}$. Notice that the new domain $\tilde{S}$ is smooth and now independent of $\varepsilon$.

\section{An Embedding Technique for the Numerical Solution} \label{sect:Numerics}
In this section we solve the transformed regularized problem given by \eqref{eqn:TPDE} and \eqref{eqn:TPDEIC} without employing a parametrization;
instead, we embed the problem itself into the ambient space $\R^3$. The transformation of Section~\ref{sect:RegTrafo} made the domain $\tilde{S}$ independent of $\varepsilon$.
The advantage of this is that the choice of the computational grid is not affected by $\varepsilon$, but we have traded this advantage for a nearly 
singular coefficient since $\beta_{\varepsilon}(\vecarg{0}) = 1/\varepsilon$. This makes the new problem stiff.

\subsection{Embedding and Discretization of the Problem}
Our numerical calculations will be carried out using the Closest Point Method \cite{Merriman,RuuthMerriman,Colin1}, which is an embedding technique for solving PDEs posed on a given regular surface
$\Gamma$ of arbitrary codimension that is smoothly embedded in $\R^n$. We begin by reviewing some of the fundamental ideas behind the Closest Point Method.

\subsubsection{Basics of the Closest Point Method}
The Closest Point Method represents a given smooth surface using a closest point function \cite{marz2012calculus}.
A closest point function $\cp:B(\Gamma) \to \Gamma$ maps every point $\x$ of a band $B(\Gamma)$ (or tubular neighborhood) of $\Gamma$ to a point $\cp(\x) \in \Gamma$ and has the two properties:
\begin{enumerate}[a)]
	\item $\cp$ is a retraction, i.e., $\cp( \x ) = \x$ if $\x \in \Gamma$,
	\item $\cp$ is continuously differentiable with $D \cp( \x ) = \Proj(\x)$, i.e., the Jacobian of $\cp$ is equal to the projector $\Proj$ onto the tangent space of $\Gamma$ whenever the point $\x$ is on the surface $\Gamma$.
\end{enumerate}
The properties can be used to establish the following two principles \cite{marz2012calculus} which are fundamental to the Closest Point Method \cite{RuuthMerriman,Colin1}:
\begin{enumerate}[1.]
	\item Gradient principle: $\nabla_\Gamma u (\x) = \nabla[u \circ \cp](\x)$, if $\x \in \Gamma$,
	\item Divergence principle: $\diver_\Gamma g (\x) = \diver[g \circ \cp](\x)$, if $\x \in \Gamma$,
\end{enumerate}
where $u:\Gamma \to \R$ is a scalar surface function and $g:\Gamma \to \R^n$ is a surface vector field.
The Gradient and Divergence principles allow us to replace surface intrinsic differentials with Cartesian differentials 
applied to the closest point extensions $u \circ \cp$ and $g \circ \cp$.
Since closest point extensions are defined on $B(\Gamma)$ these principles when applied to a PDE posed on $\Gamma$ yield an embedding equation posed on the band $B(\Gamma)$.

We now elaborate further on our construction of the closest point function.

\subsubsection{A Closest Point Function For the Desingularized Domain}\label{sect:CPF}
For the domain $\tilde{S}$ of problem \eqref{eqn:TPDE}--\eqref{eqn:TPDEIC} we have an implicit description 
\begin{align}
	\varphi(x,y,z) &:= z^2 + \left(x - \frac{1}{2}\right)^2 - \frac{1}{4}  = 0 \;, \\
	\psi(x,y,z) &:= y - zx = 0
\end{align}
from Section~\ref{sect:SingularityResolution}.
Following the approach of \cite[Section 5]{marz2012calculus}, we construct---in a two-stage procedure---a closest point function $\cp: B(\tilde{S}) \rightarrow \tilde{S}$ directly using the given $\psi$ and $\phi$.
Let $S_{\varphi}$ and $S_{\psi}$ denote zero-level surfaces of $\varphi$ and $\psi$ respectively. The intersection of these is exactly $\tilde{S}$.
The first stage is to map a given point $\x \in B(\tilde{S})$ onto the surface $S_{\psi}$.
This is done by intersecting the trajectory $\xi$ of steepest decent, i.e.,
\begin{equation}\label{eq:Steep}
	\xi' = -\nabla \psi \circ \xi \;,\qquad \xi(0) = \x
\end{equation}
with the surface $S_{\psi}$. This defines a unique point $\bar{\x} \in S_{\psi}$.
The second stage is then to map the new point $\bar{\x} \in S_{\psi}$ onto $\tilde{S}$. Here, we intersect
the trajectory $\eta$ of
\begin{equation}\label{eqn:psiIVP}
	\eta' = -\nabla_{S_{\psi}} \varphi \circ \eta \;,\qquad \eta(0) = \bar{\x}
\end{equation}
with $\tilde{S}$. The second stage is well-defined because $\nabla_{S_{\psi}} \varphi$ is always tangent to $S_{\psi}$, hence the trajectory $\eta$ lies completely on the surface $S_{\psi}$
and evolves towards $\tilde{S}$. The second intersection is the point $\cp(\x) \in \tilde{S}$.
This construction defines a differentiable retraction. The proof that this defines a closest point function can be found in \cite{marz2012calculus}.

Now we discuss how to define the band $B(\tilde{S})$ which forms the domain of $\cp$.
Since $\nabla \psi$ does not vanish we can solve for $\xi$ for arbitrary initial points $\x$.
However, $\nabla \varphi$ vanishes on the straight line $\{ (1/2,y,0) : y \in \R \}$. The intersection of this line with $S_{\psi}$ is the point $(1/2,0,0)$.
Consequently, $\nabla_{S_{\psi}} \varphi$ will vanish only at $(1/2,0,0)$ and thus we cannot solve for $\eta$ by \eqref{eqn:psiIVP} if $\bar{\x} = (1/2,0,0)$.
By ODE \eqref{eq:Steep} we observe that the set of points that are mapped to $\bar{\x} = (1/2,0,0)$ in the first step is given by
the trajectory $\hat{\xi}(\tau) =  (\cosh(\tau)/2,\tau, - \sinh(\tau)/2)$, $\tau \in \R$. This implies that the maximal band is $B_{\max}(\tilde{S})= \R^3 \wo \hat{\xi}(\R)$.
In practice we define and use a smaller band that is contained in $B_{\max}(\tilde{S})$; for example, the following band is also admissible under the above criteria:
\begin{equation}\label{eq:band}
	B(\tilde{S}) := \left\{ (x,y,z) \in \R^3 : \sqrt{ 5 \, \varphi(x,y,z)^2 + \psi(x,y,z)^2 } < \frac{1}{2} \right\}.
\end{equation}
This choice of band is depicted in Figure~\ref{fig:band}.

\begin{figure}[t]
	\begin{center}
		\includegraphics[width=.5\textwidth]{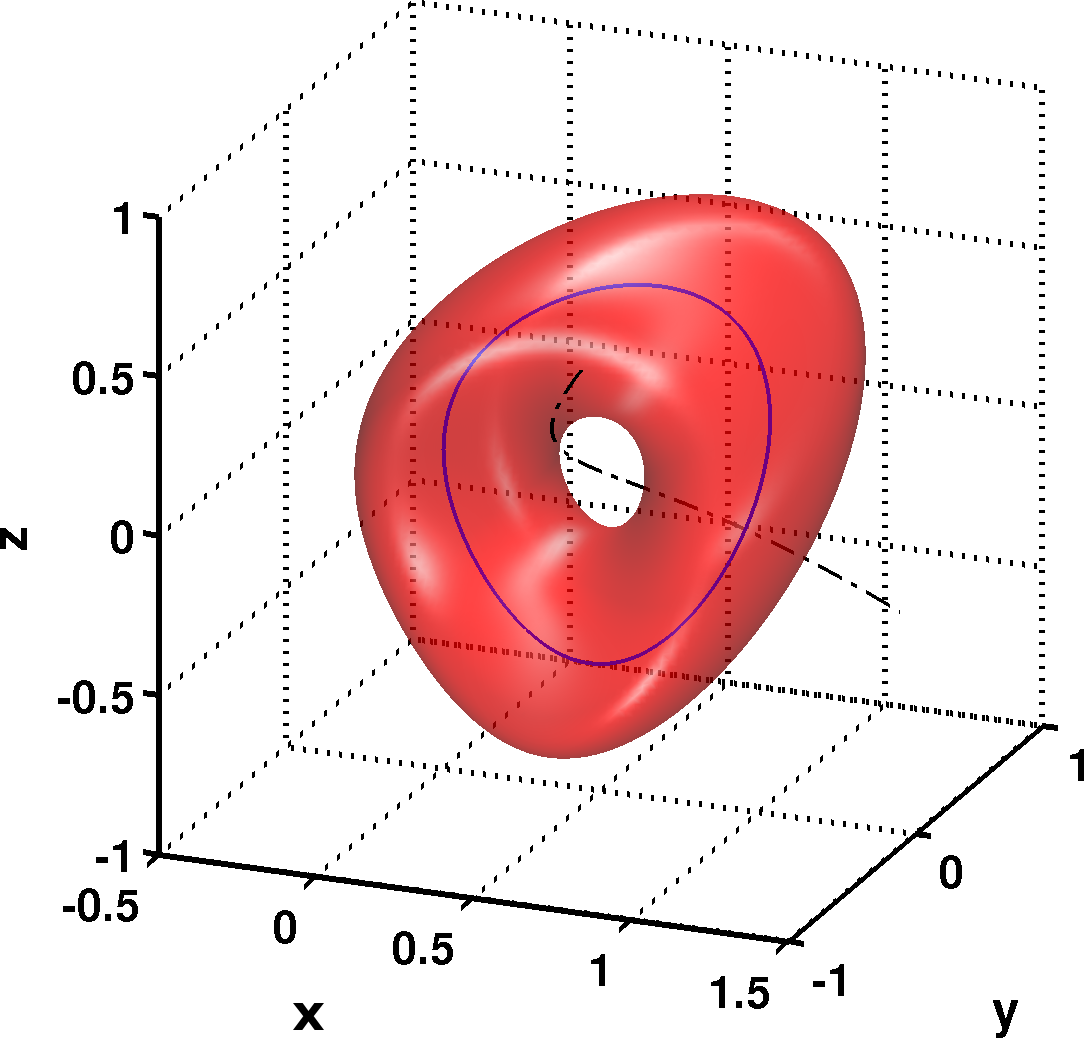}
	\end{center}
	\caption{A possible choice of the band $B(\tilde{S})$ (red) defined according to \eqref{eq:band}. The blue solid line is the curve $\tilde{S}$,
	while the black dashed line is $\hat{\xi}$, the set of points non-projectable by the closest point function $\cp$ as constructed in Section~\ref{sect:CPF}.
	The maximal possible band is $B_{\max}(S)= \R^3 \wo \hat{\xi}(\R)$.}\label{fig:band}
\end{figure}

\subsubsection{The Embedding Equation}
The embedding equation is obtained by applying the Divergence and Gradient principles to the original problem.
For this purpose we define the linear extension operator $E$ by $Eu(\x) := u \circ \cp(\x)$. The benefit of this is that the Divergence and Gradient principles
may be written as $E \diver_{\tilde{S}} g = E \diver E g$ and $E \nabla_{\tilde{S}} Eu$, see \cite{marz2012stab}.
We now apply the extension operator to \eqref{eqn:TPDE} and \eqref{eqn:TPDEIC} and arrive at the following embedded evolution equation:
\begin{equation} \label{eqn:EmbededEvo}
	\begin{aligned}
		& \pd_t E w_{\varepsilon} = E L E w_{\varepsilon} - \mu^2 E w_{\varepsilon} \;,\quad t \in (0,\infty) \;,\; \x=(x,y,z) \in B(\tilde{S}) \;,\\
		& E w_{\varepsilon} (0, \x) = u_0 ( \cp(\x)_1 , \cp(\x)_2 ) \;,\quad \x=(x,y,z) \in B(\tilde{S}) \;,
	\end{aligned}
\end{equation}
where the operator $L$ is defined by
\begin{equation} \label{eqn:LfDef}
	L f :=  \hat{\beta}_{\varepsilon}  \diver E \left( \hat{\beta}_{\varepsilon}  \nabla f \right) \;, \quad \hat{\beta}_{\varepsilon}(\x) := \beta_{\varepsilon} \circ \cp(\x).
\end{equation}
Finally, we write the embedded evolution equation \eqref{eqn:EmbededEvo} in the equivalent form
\begin{subequations}
	\begin{align}
		&  \check{w}_{\varepsilon} (0, \x) = u_0 ( \cp(\x)_1 , \cp(\x)_2 ) \label{eqn:Init} \\
		&  \pd_t \check{w}_{\varepsilon} = (E L - \mu^2 I) \check{w}_{\varepsilon} \;,\quad t \in (0,\infty) \;,\; \x=(x,y,z) \in B(\tilde{S}) \;, \label{eqn:Evo}\\
		&  \check{w}_{\varepsilon} = E \check{w}_{\varepsilon} \;. \label{eqn:Ext}
	\end{align}
\end{subequations}
Here, we have replaced $E w_{\varepsilon}$ with $\check{w}_{\varepsilon}$ and used the fact that $E$ is idempotent \cite{marz2012calculus,marz2012stab,vonglehn2012mol}.
The operator $I$ in \eqref{eqn:Evo} denotes the identity operator.

\subsubsection{Discretization of the Embedding Equation}
We discretize the embedded problem given by \eqref{eqn:Init}--\eqref{eqn:Ext} in a similar manner to \cite{RuuthMerriman,marz2012stab}, i.e. \eqref{eqn:Evo} is discretized in time with
either a forward Euler (Algorithm~\ref{algo:expl}) or a backward Euler (Algorithm~\ref{algo:impl}) time step (of step size $\tau$) followed by an extension in order to satisfy \eqref{eqn:Ext}.
For the discretization in space, we use a uniform Cartesian grid $G_h$ of mesh width $h$ on the box $[-0.5 , 1.5] \times [-1 , 1] \times [-1,1]$.
Computations are performed within the banded grid $G_h \cap B(\tilde{S})$ and we denote a point in the grid by $\x_h$.
We further use the subscript $h$ for spatially discrete functions (vectors) and operators (matrices). The superscript $n$ is used for temporally discrete functions and refers to time $n \tau$
where $\tau$ is our time step-size.
The spatially discrete operators in Algorithms~\ref{algo:expl} and \ref{algo:impl} are $I_h$, $E_h$, $L_h$.
Here, $I_h$ is simply the identity matrix while $E_h$ is an interpolation matrix. Because $\cp(\x_h)$ is in general not a grid point,
we approximate the function value of a closest point extension $Ew(\x_h) = w \circ \cp(\x_h)$.
This is done by performing a tri-cubic interpolation with the discrete function data $w_h$ on the $4 \times 4 \times 4$-point neighborhood surrounding the point $\cp(\x_h)$ \cite{RuuthMerriman,Colin1}.
Finally, we expand the operator $L$ from \eqref{eqn:LfDef} in terms of partial derivatives and obtain $L_h$ as a composition of matrices:
\begin{equation} \label{DiscreteLh}
	L_h :=  B_h \cdot \left( D_{x,h} E_h B_h D_{x,h} + D_{y,h} E_h B_h D_{y,h} + D_{z,h} E_h B_h D_{z,h}  \right),
\end{equation}
where $B_h$ is a diagonal matrix representing the factor $\hat{\beta}_{\varepsilon}$. The matrices $D_{x,h}$, $D_{y,h}$, $D_{z,h}$ approximate the partial derivatives
$\pd_x$, $\pd_y$, $\pd_z$ by central differences, and $E_h$ is again the extension matrix.

\begin{table}[tbp]
	\centering
	\begin{tabular}{lcr}
		\begin{minipage}{.4\textwidth}
			\centering
			\begin{algorithm}[H]
				\begin{algorithmic}
					\State $\check{w}^0_{h,\varepsilon} := u_0 ( \cp(\x_h)_1 , \cp(\x_h)_2 )$
					\For{$n=0,1,2,\ldots$}
						\State $w_h := (I_h + \tau (E_h L_h - \mu^2 I_h)) \cdot \check{w}^n_{h,\varepsilon}$
						\State $\check{w}^{n+1}_{h,\varepsilon} := E_h \cdot w_h$
					\EndFor
				\end{algorithmic}
				\caption{Explicit Iteration}\label{algo:expl}
			\end{algorithm}
		\end{minipage}
		&~~~~~~~~~~~~~~&
		\begin{minipage}{.4\textwidth}
			\centering
			\begin{algorithm}[H]
				\begin{algorithmic}
					\State $\check{w}^0_{h,\varepsilon} := u_0 ( \cp(\x_h)_1 , \cp(\x_h)_2 )$
					\For{$n=0,1,2,\ldots$}
						\State $w_h := (I_h - \tau (E_h L_h - \mu^2 I_h))^{-1} \cdot \check{w}^n_{h,\varepsilon}$
						\State $\check{w}^{n+1}_{h,\varepsilon} := E_h \cdot w_h$
					\EndFor
				\end{algorithmic}
				\caption{Implicit Iteration}\label{algo:impl}
			\end{algorithm}
		\end{minipage}
	\end{tabular}
	\caption{Explicit and implicit closest point iteration for problem \eqref{eqn:Init}--\eqref{eqn:Ext}.}
\end{table}

\subsubsection{Consistency and Stability}
In \cite{marz2012stab}, it is shown that the truncation error of both the explicit and the implicit scheme is of order $\Order( h^2  +  h^4/\tau + \tau)$.
Thus we choose the time step-size to be $\tau=\Order(h^2)$ to obtain good accuracy.
This gives us an overall truncation error of $\Order(h^2)$ and, assuming stability, an anticipated convergence rate of $\Order(h^2)$.
The following heuristics guide our choice of time step-size.
Notice that problem \eqref{eqn:TPDE} is stiff: the function is largest at the origin $\beta_{\varepsilon}(\vec{0}) = 1/\varepsilon$, $\varepsilon \ll 1$,
hence the spatial operator behaves like
\begin{equation}
	\beta_{\varepsilon} \diver_{\tilde{S}} \left( \beta_{\varepsilon} \nabla_{\tilde{S}} w_{\varepsilon} \right) - \mu^2 w_{\varepsilon}
	\approx \frac{1}{\varepsilon^2} \laplace_{\tilde{S}} w_{\varepsilon}  - \mu^2 w_{\varepsilon} \quad \text{as} \quad \x \to \vec{0} \;.
\end{equation}
Since $G_{\text{expl}} := E_h (I + \tau( L_h - \mu^2 I))$, the iteration matrix of the explicit iteration (Algorithm~\ref{algo:expl}), is obtained from the forward Euler time-stepping scheme,
we arrive at the following time-step restriction:
\begin{equation}\label{eqn:explTstep}
	\tau = C_{\text{expl}} \cdot \varepsilon^2 \cdot h^2 \;.
\end{equation}
As we are interested in small parameter values $\varepsilon \ll 1$, this implies a very large number of time steps to reach a given stop time.
In contrast to this, the implicit iteration with $G_{\text{impl}} := E_h (I - \tau( E_h L_h - \mu^2 I))^{-1}$, which is based on backward Euler, 
is unconditionally stable (regarding $\varepsilon$) and can therefore be based on accuracy concerns alone, i.e.,
\begin{equation}\label{eqn:implTstep}
	\tau = C_{\text{impl}} \cdot h^2 \;.
\end{equation}
In our experiments, the time step-size $\tau$ is chosen according to \eqref{eqn:explTstep} with $C_{\text{expl}} = 1/4$
in the explicit case and according to \eqref{eqn:implTstep} with $C_{\text{impl}} = 1$ in the implicit case.

\subsection{Numerical Examples}
\begin{figure}[tbp]
	\begin{minipage}{.475\textwidth}
		\centering
		\includegraphics[width=.9\textwidth]{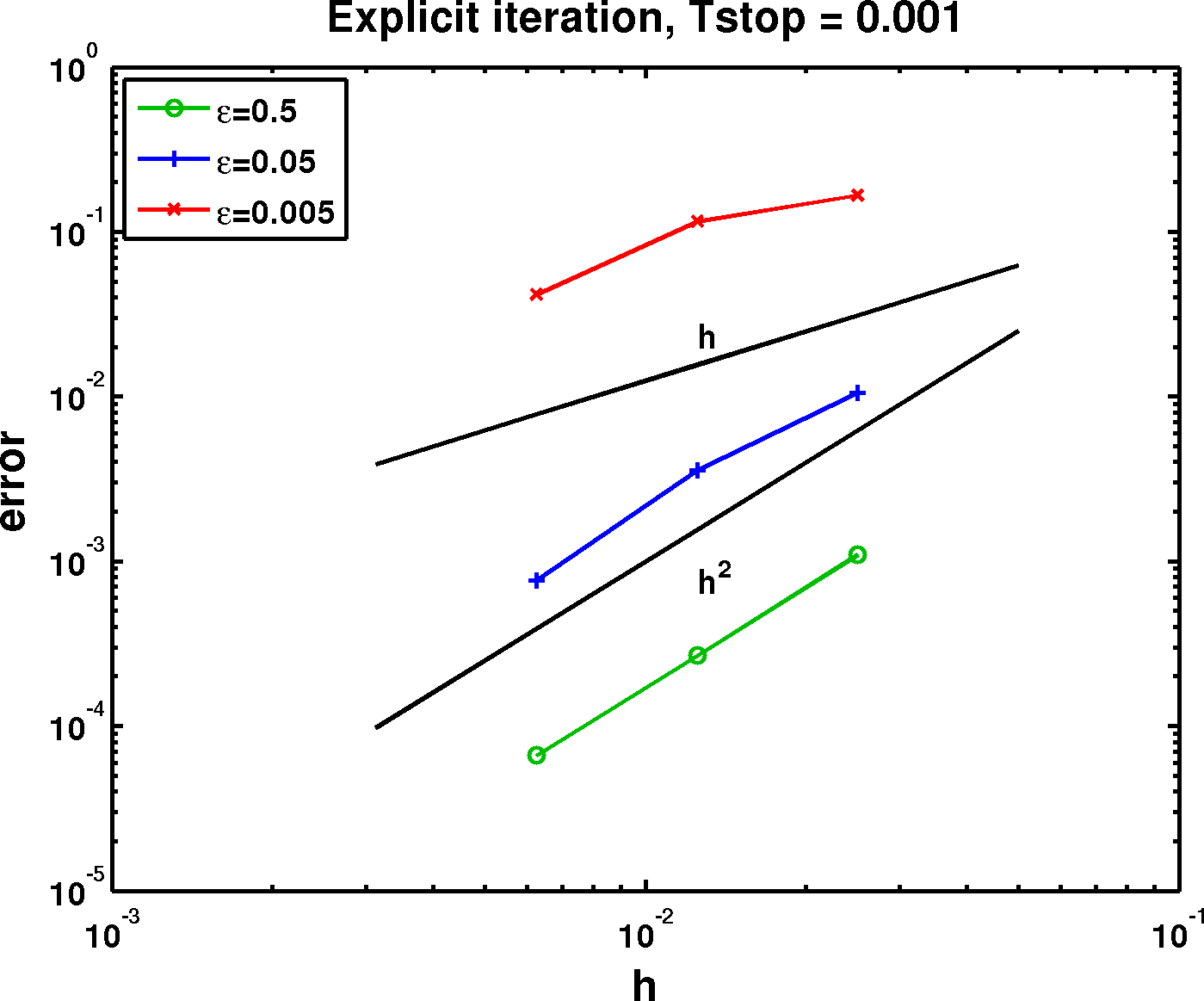}
	\end{minipage}
	\hfill
	\begin{minipage}{.475\textwidth}
		\centering
		\includegraphics[width=.9\textwidth]{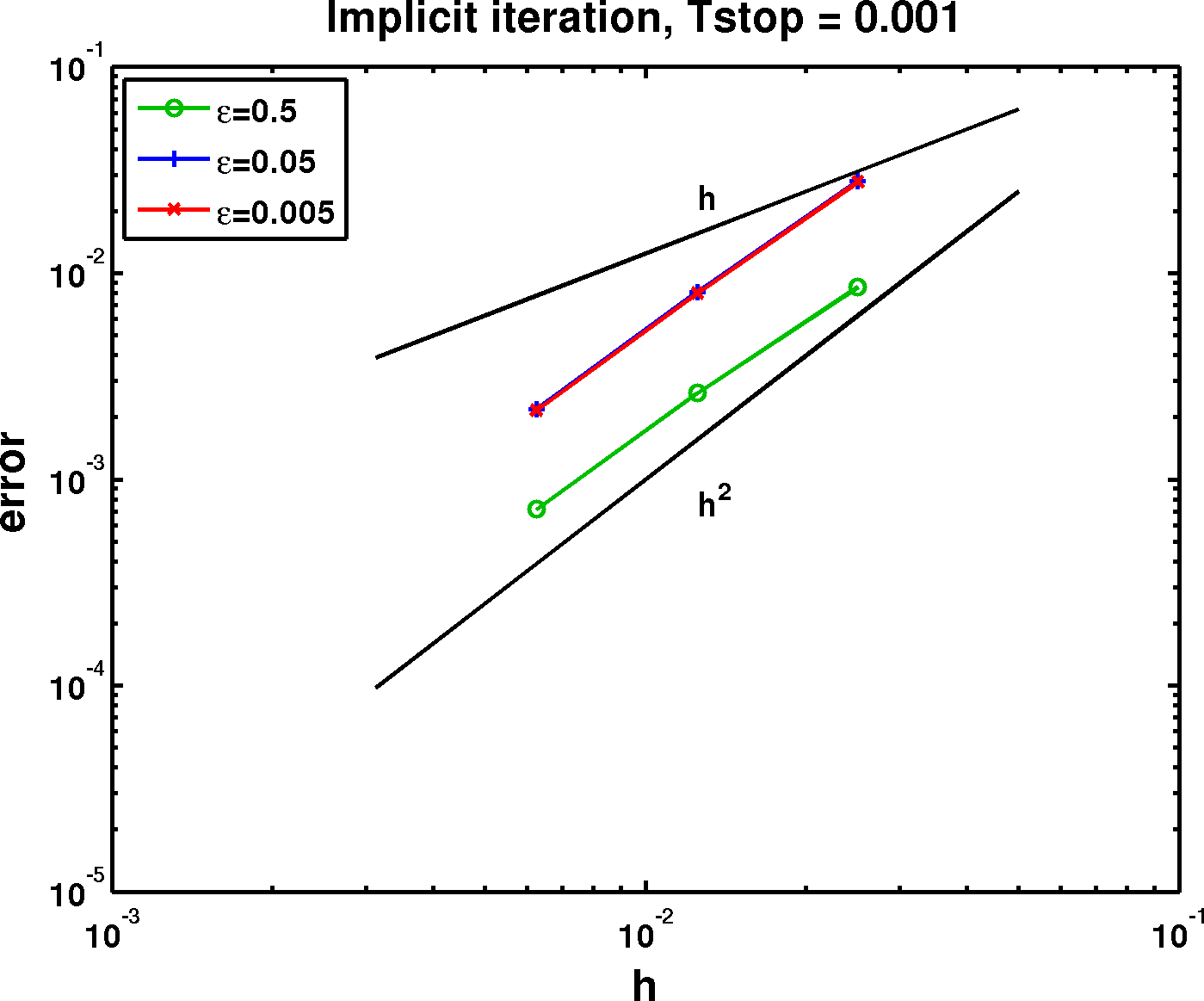}
	\end{minipage}
	\medskip

	\begin{minipage}{.475\textwidth}
		\centering
		\includegraphics[width=.9\textwidth]{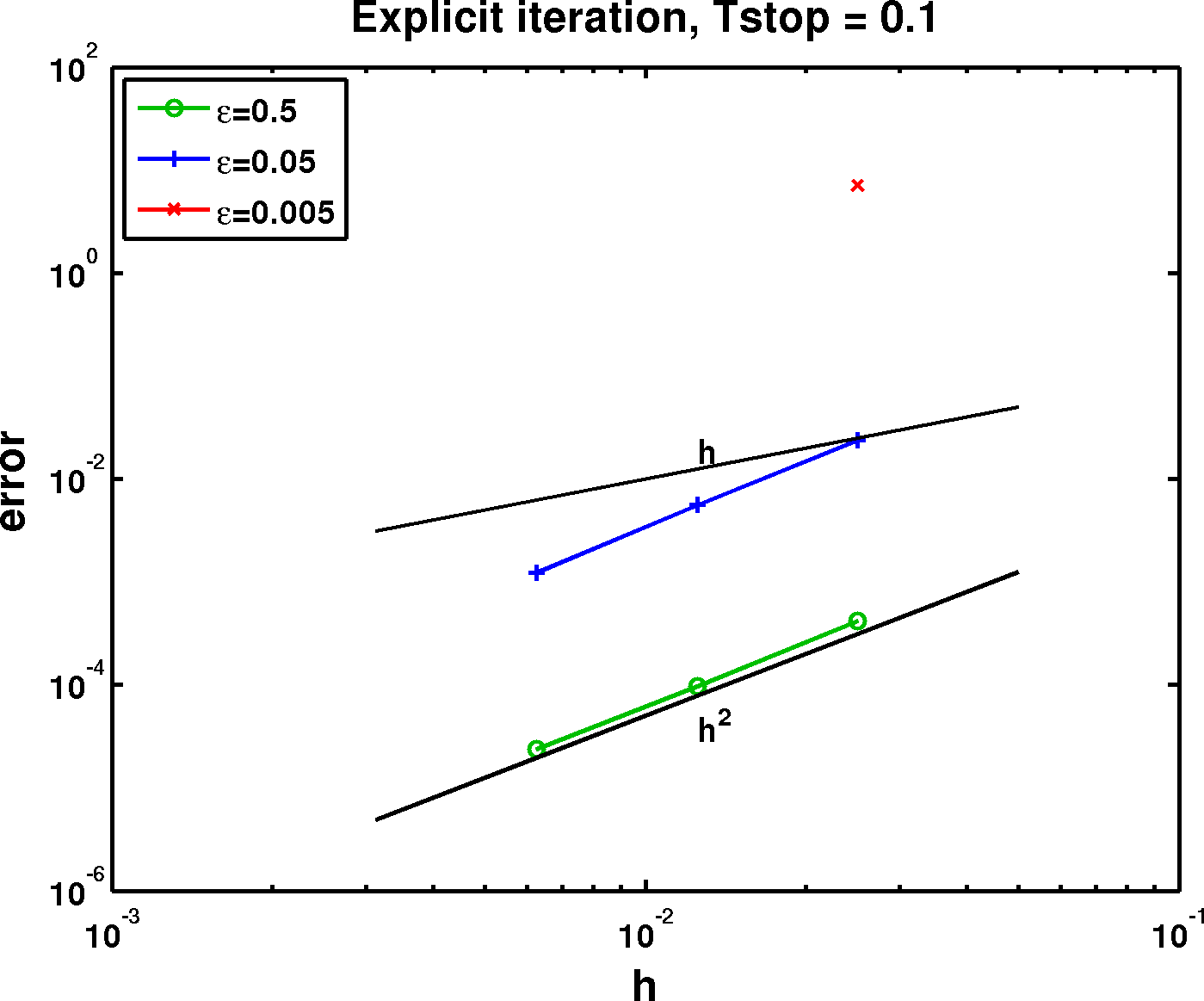}
	\end{minipage}
	\hfill
	\begin{minipage}{.475\textwidth}
		\centering
		\includegraphics[width=.9\textwidth]{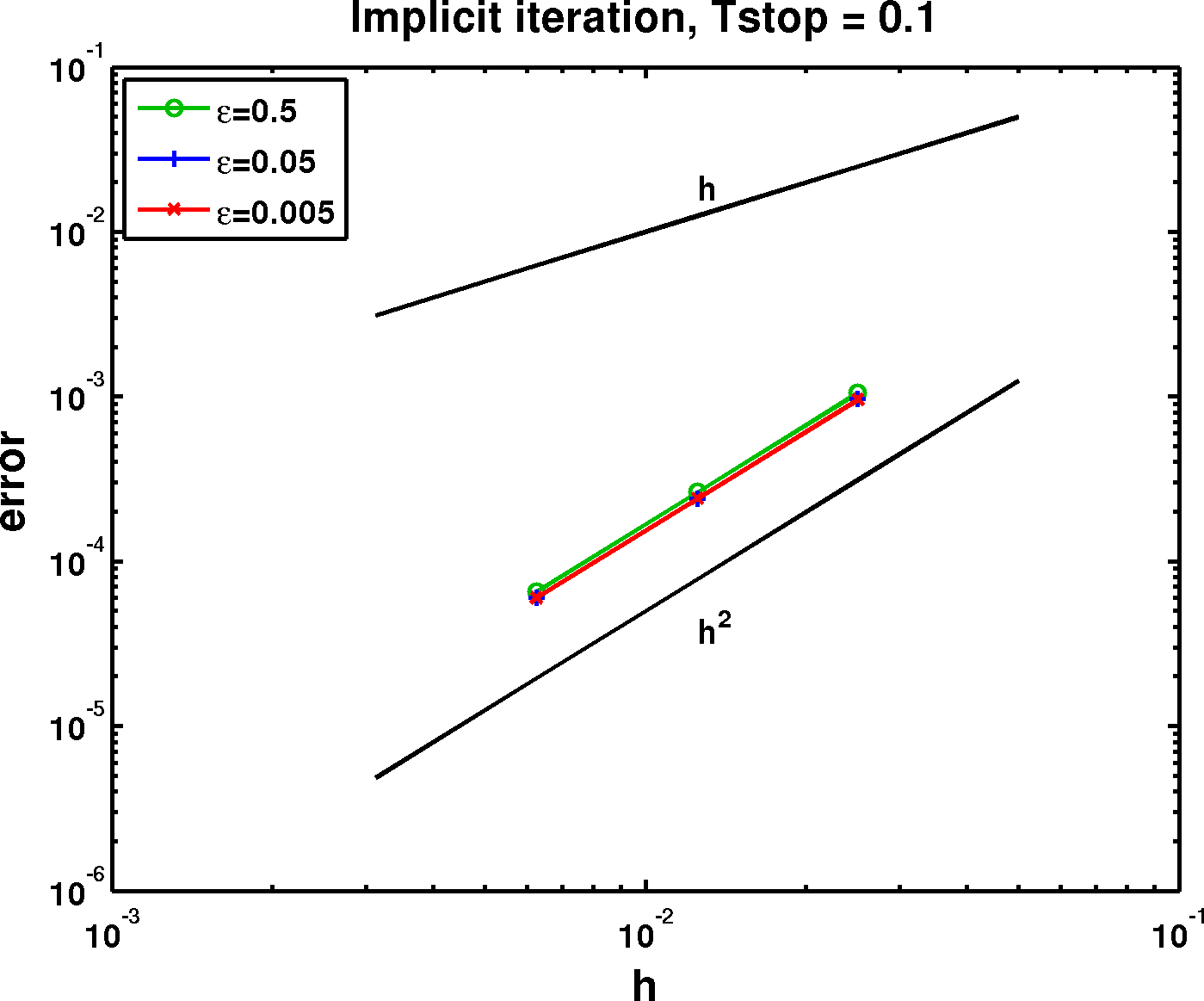}
	\end{minipage}
	\caption{Double logarithmic plot of the errors in Algorithm~\ref{algo:expl} (left column) and in Algorithm~\ref{algo:impl} (right column)
				measured in the $l^{\infty}$-norm at stop times $t=0.001$ (first row) and $t=0.1$ (second row). The initial condition is given in \eqref{eqn:ExpInitCond}
				and the reaction parameter is $\mu=1$.} \label{fig:CPconv}
\end{figure}
We run Algorithms~\ref{algo:expl} and \ref{algo:impl} for different choices of $\varepsilon \in \{0.5,0.05,0.005\}$ and measure errors by comparing the results to
the independently obtained parametric solutions of Section~\ref{sect:ExpConv}. The initial function is
\begin{equation}\label{eqn:ExpInitCond}
	u_0 (x,y) = \frac{\exp \left( 4 \cdot (2x-1)^2 \right)}{50} \;,
\end{equation}
which corresponds to \eqref{eqn:InitialCondition}. The parameter $\mu$ in the reaction term is set equal to $\mu=1$.

Figure \ref{fig:CPconv} shows the errors as a function of the mesh width $h$ in double logarithmic plots. The left column gives results for the explicit iteration while the right column gives results for
the implicit iteration. The first row gives the error at a stop-time of $t_f=0.001$, and the second row gives the error at a stop-time of $t_f=0.1$. In all cases,
the error was measured in the $l^{\infty}$-norm over the parameter domain $\theta \in [-\pi, \pi)$.
Examining the right column, we observe a convergence rate of $\Order(h^2)$, and that the error constant is relatively independent of the parameter
$\varepsilon$ which caused the stiffness. For the explicit iteration, we observe an $\Order(h^2)$-convergence for fixed $\varepsilon$, however the error constant grows as
$\varepsilon$ decreases. The number of time steps needed for the explicit iteration to reach a certain stop time becomes excessively large for small values of $\varepsilon$
(compare to \eqref{eqn:explTstep}). This explains why the red curve is missing in the bottom left plot of Figure~\ref{fig:CPconv}.
Clearly, the implicit iteration is much better suited for our stiff problem since the time step-size is independent of $\varepsilon$.

Summarizing, we see that the reference problem (equations equations~\eqref{eqn:RefProbPDE1}--\eqref{eqn:RefProbIC}) can be solved by combining the blow-up method with an embedding technique
that can handle arbitrary co-dimensions such as the Closest Point Method.

\section{Extensions and Generalizations}\label{sect:Extensions}

\subsection{Regularization by Desingularization of the Domain} \label{sect:BlowUpGenerality}
In Section~\ref{sect:Regularization} we regularized the reaction-diffusion problem posed on the cuspidal curve shown in Figure~\ref{fig:CuspidalCurve}.
The main step was the desingularization of the PDE domain via the resolution of the cusp singularity.
In general we consider reaction-diffusion PDEs posed on real algebraic surfaces with finitely many isolated singularities and want to apply the same regularization technique.

In order to resolve a single singularity of a real algebraic surface embedded in $\R^n$ we blow up $\R^n$ at the point $\vec{p} \in \R^n$ of singularity.
As in Section~\ref{sect:SingularityResolution} we investigate the tangent space $\Tang_p S$ of $S$ at $\vec{p}$ by adjoining equations that describe all straight lines through $\vec{p}$. 
Any straight line through $\vec{p}$ can be parametrized by $\vec{p} + t \vec{z}$ with a non-zero vector $\vec{z} \in \R^n$ and parameter $t \in \R$. 
If $\x$ is a point on this straight line then $\x-\vec{p}$ and $\vec{z}$ must be linearly dependent.
Consequently, the matrix 
\begin{equation}
\left[
	\begin{matrix}
		x_1-p_1  & z_1 \\
		\vdots & \vdots \\
		x_n-p_n  & z_n 
		\end{matrix}
\right]
\end{equation}
has rank $\le1$ and all its $2 \times 2$-minors vanish. 
The vanishing $2 \times 2$-minors yield an implicit description of the straight line through $\vec{p}$ in direction $\vec{z}$ by
\begin{equation}\label{eqn:BupSurf}
	(x_i-p_i) z_j - (x_j-p_j) z_i = 0 \;, \qquad 1 <= i < j <= n \;.
\end{equation}
In \eqref{eqn:BupSurf} there is plenty of redundancy since there are $n(n-1)/2$ equations in $\x$ to describe a straight line in $\R^n$.
Hence, we can choose a subset of $n-1$ linearly independent equations which we adjoin to our original system of polynomial equations that define the surface $S$. 

The so-called blow-up of $\R^n$ at the point $\vec{p} \in \R^n$ is given by viewing \eqref{eqn:BupSurf} as a system of equations in variables $\x$ and $\vec{z}$.
As on the one hand $\vec{z}$ is non-zero and on the other the length of $\vec{z}$ is irrelevant, it is standard in algebraic geometry to consider $\vec{z}$ as an 
element of the projective space $\PS^{n-1}$, i.e., $(\x,\vec{z}) \in \R^n \times \PS^{n-1}$. For more details on blow-ups the reader is referred to \cite{Hartshorne,Harris,smith2000invitation}.
Since we want to blow up in $\R^n \times \R^{n-1}$, we use one component of $\vec{z}$ as the parameter $\varepsilon$, i.e., $z = (\varepsilon, \hat{\vec{z}}) \in \R \times \R^{n-1}$,
and view \eqref{eqn:BupSurf} as a system of equations in variables $\x$ and $\hat{\vec{z}}$. This yields a one-parameter family of blow-ups in $\R^n \times \R^{n-1}$.

The final steps to desingularize are: substitute \eqref{eqn:BupSurf} into the original system of polynomial equations, pull out the factors that caused the singularity and
neglect them. In the end we obtain a system of polynomial equations which describes a family $\tilde{S}_{\varepsilon}$, $\varepsilon > 0$, of smooth surfaces that
correspond to the original surface $S$ by projection, i.e., $\varepsilon=0$. Given several singularities, we resolve one at a time by the same procedure. 
We will demonstrate this procedure later with more examples. In addition, we point out that blow-up calculations can be automatized with the computer algebra system SINGULAR \cite{DGPS}.  
  
This procedure for the resolution of singularities generalizes that of Section~\ref{sect:SingularityResolution}. Now, we would like to regularize, as in Section~\ref{sect:RegProblem},
by considering the same reaction-diffusion problem on $\tilde{S}_{\varepsilon}$. But this is not possible if the blow-up in $\R^n \times \R^{n-1}$ does not preserve the topology or connectivity
of the original domain of the PDE. We now give two examples of what can go wrong.
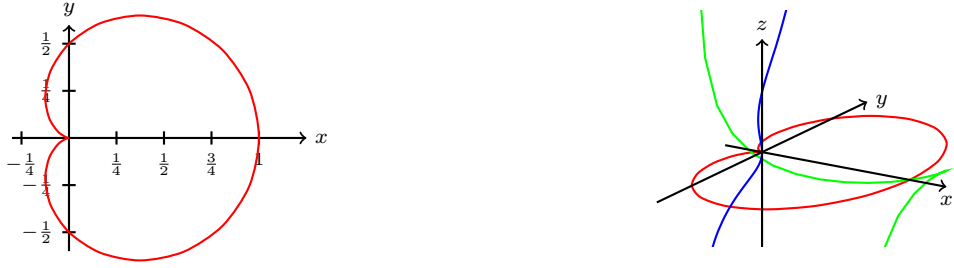
\begin{figure}[tbp]
	\begin{minipage}{.475\textwidth}
		\centering
		\begin{tikzpicture}[scale=2.5,thick]
			\draw[->] (-.3,0) -- (1.25,0) node[right] {$x$};
			\foreach \x/\xtext in {-.25/-\frac{1}{4} , .25/\frac{1}{4},.5/\frac{1}{2},.75/\frac{3}{4},1} \draw (\x ,1pt) -- (\x ,-1pt) node[anchor=north] {\scriptsize $\xtext$};

			\draw[->] (0,-.6) -- (0,.6) node[above] {$y$};
			\foreach \y/\ytext in {-0.5/-\frac{1}{2}, -.25/-\frac{1}{4}, .25/\frac{1}{4}, 0.5/\frac{1}{2}} \draw (1pt, \y) -- (-1pt,\y) node[anchor=east] {\scriptsize $\ytext$};
			\draw[color=red,domain=-3.141:3.141,smooth,variable=\t] plot ({cos(\t r)*(1+cos(\t r))*.5},{sin(\t r)*(1+cos(\t r))*.5});
		\end{tikzpicture}
	\end{minipage}
	\hfill
	\begin{minipage}{.475\textwidth}
		\centering
		\begin{tikzpicture}[scale=2.5,thick]
			\clip (-1,-.5) rectangle (1,.75);
			\draw[color=red,domain=-3.141:3.141,smooth,variable=\t] 
			plot({(\cosmypsi*cos(\t r)-\sinmypsi*sin(\t r))*(1+cos(\t r))*.5}, 
			0,
			{(\sinmypsi*cos(\t r) + \cosmypsi*sin(\t r))*(1+cos(\t r))*.5 });

			\draw[color=green,domain=-1.5:1.5,variable=\t] 
			plot({(\cosmypsi*cos(\t r)-\sinmypsi*sin(\t r))*(1+cos(\t r))*.5}, 
			{.25*sin(\t r)/cos(\t r)},
			{(\sinmypsi*cos(\t r) + \cosmypsi*sin(\t r))*(1+cos(\t r))*.5 });

			\draw[color=blue,domain=1.8:4.5,smooth,variable=\t]
			plot({(\cosmypsi*cos(\t r)-\sinmypsi*sin(\t r))*(1+cos(\t r))*.5}, 
			{.25*sin(\t r)/cos(\t r)},
			{(\sinmypsi*cos(\t r) + \cosmypsi*sin(\t r))*(1+cos(\t r))*.5 });

			\draw[->] (-.25*\cosmypsi,0,-.25*\sinmypsi) -- (1.25*\cosmypsi,0,1.25*\sinmypsi) node[anchor=north] {$x$};
			\draw[->] (-.75*\sinmypsi,0,.75*\cosmypsi) -- (.75*\sinmypsi,0,-.75*\cosmypsi) node[anchor=west] {$y$};

			\draw[->] (0,-.6,0) -- (0,.6,0) node[anchor=south] {$z$};
		\end{tikzpicture}
	\end{minipage}
	\caption{Left: cardioid in $\R^2$ given by \eqref{eqn:Cardioid}. Right: cardioid embedded in $xy$-plane of $\R^3$ (red) and its desingularization ($x > 0$ green, $x < 0$ blue) given by \eqref{eqn:CardReg}
				with $\varepsilon = 1/4$. The blow-up procedure tears the curve apart.}
	\label{fig:Cardioid}
\end{figure}

The first example is the cardioid (see left plot of Figure~\ref{fig:Cardioid}) which is given by
\begin{equation}\label{eqn:Cardioid}
	(x^2+y^2)^2 - x (x^2+y^2) - \frac{y^2}{4} = 0 \;.
\end{equation}
The procedure above or that of Section~\ref{sect:SingularityResolution} yields the following desingularization:
\begin{equation}\label{eqn:CardReg}
	\begin{aligned}
		\left( x \left(1+\frac{z^2}{\varepsilon^2}\right) - \frac{1}{2} \right)^2 - \frac{1}{4}\left(1+\frac{z^2}{\varepsilon^2}\right) &= 0\;,\\
		y - \frac{z}{\varepsilon} x &= 0 \;.
	\end{aligned}
\end{equation}
From this description we derive the following parametrization $\tilde{\gamma}_{\varepsilon}: [-\pi,\pi) \rightarrow \mathbb{R}^3$, 
$\tilde{\gamma}_{\varepsilon} = ((1+\cos(\theta))\cos(\theta)/2 , (1+\cos(\theta)) \sin(\theta)/2, \varepsilon \tan(\theta))$ of $\tilde{S}_{\varepsilon}$. 
Here, the blow-up in $\R^2 \times \R^1$ tears the curve apart, since $|\tan(\theta)|$ tends to infinity as $\theta$ approaches $\pm \pi/2$
(see right plot of Figure~\ref{fig:Cardioid}).
The blow-up changes the topology compared to the original curve which was a closed loop.
Consequently, we cannot use the same reaction-diffusion problem on $\tilde{S}_{\varepsilon}$ to regularize the problem.

\begin{figure}[tbp]
	\begin{minipage}{.475\textwidth}
		\centering
		\begin{tikzpicture}[scale=2.5,thick]
			\draw[->] (-1.25,0) -- (1.25,0) node[right] {$x$};
			\foreach \x/\xtext in {-1,-.75/-\frac{3}{4},-.5/-\frac{1}{2},-.25/-\frac{1}{4} , .25/\frac{1}{4},.5/\frac{1}{2},.75/\frac{3}{4},1} \draw (\x ,1pt) -- (\x ,-1pt) node[anchor=north] {\scriptsize $\xtext$};

			\draw[->] (0,-.6) -- (0,.6) node[above] {$y$};
			\foreach \y/\ytext in {-0.5/-\frac{1}{2}, -.25/-\frac{1}{4}, .25/\frac{1}{4}, 0.5/\frac{1}{2}} \draw (1pt, \y) -- (-1pt,\y) node[anchor=east] {\scriptsize $\ytext$};
			\draw[color=red,domain=-3.141:3.141,smooth,variable=\t] plot ({sin(\t r)},{cos(\t r)*sin(\t r)});
		\end{tikzpicture}
	\end{minipage}
	\hfill
	\begin{minipage}{.475\textwidth}
		\centering
		\begin{tikzpicture}[scale=2.5,thick]
			\draw[color=red,domain=-3.141:3.141,smooth,variable=\t] plot ({\cosmypsi * sin(\t r) - \sinmypsi * cos(\t r)*sin(\t r)},0,{\sinmypsi * sin(\t r) + \cosmypsi * cos(\t r)*sin(\t r)});
			\draw[color=green,domain=-3.141:3.141,smooth,variable=\t] plot ({\cosmypsi * sin(\t r) - \sinmypsi * cos(\t r)*sin(\t r)},{-.25*cos(\t r)},{\sinmypsi * sin(\t r) + \cosmypsi * cos(\t r)*sin(\t r)});

			\draw[->] (-1.25*\cosmypsi,0,-1.25*\sinmypsi) -- (1.25*\cosmypsi,0,1.25*\sinmypsi) node[anchor=north] {$x$};
			\draw[->] (-.5*\sinmypsi,0,.5*\cosmypsi) -- (.5*\sinmypsi,0,-.5*\cosmypsi) node[anchor=west] {$y$};

			\draw[->] (0,-.6,0) -- (0,.6,0) node[anchor=south] {$z$};
		\end{tikzpicture}
	\end{minipage}
	\caption{Left: figure-eight curve in $\R^2$ given by $y^2-x^2+x^4 = 0$. Right: figure-eight curve embedded in $xy$-plane of $\R^3$ (red) and its desingularization (green) given by \eqref{eqn:FigEightReg}
				with $\varepsilon = 1/4$. The blow-up procedure pulls the origin apart.}
	\label{fig:FigureEightBlowUp}
\end{figure}
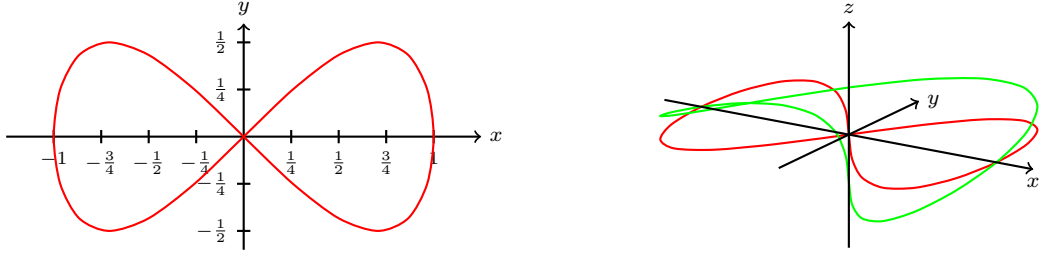

As second example consider the figure-eight curve (see left plot of Figure~\ref{fig:FigureEightBlowUp}) given by
\begin{equation}\label{eqn:FigureEight}
	y^2-x^2+x^4 = 0 \;. 
\end{equation}
The desingularization yields
\begin{equation}\label{eqn:FigEightReg}
	\begin{aligned}
		\left(\frac{z}{\varepsilon}\right)^2 + x^2 &= 1 \;,\\
		y - \frac{z}{\varepsilon} x &= 0 \;.
	\end{aligned}
\end{equation}
A parametrization $\tilde{\gamma}_{\varepsilon}: [-\pi,\pi) \rightarrow \mathbb{R}^3$ is given $ \tilde{\gamma}_{\varepsilon} = (\sin(\theta) , \cos(\theta)\sin(\theta) , \varepsilon \cos(\theta))$.
It is clear from this parametrization that the $z$-axis intersects with the regularized curve $\tilde{S}_{\varepsilon}$ twice.
Indeed, the blow-up procedure
regularized the curve by pulling it apart at the origin as shown in the right plot of Figure~\ref{fig:FigureEightBlowUp}. 

This situation is very different from the cardioid because here it depends on the boundary conditions if we obtain a consistent regularization or not.
The original reaction-diffusion problem consists of a PDE on the figure-eight curve excluding the origin, together with boundary conditions at the origin.
If we impose boundary conditions such that the origin is a cross-junction then resolution of the singularity changes the topology 
since the green curve in the right plot of Figure~\ref{fig:FigureEightBlowUp} exhibits a different connectivity. 
In this case considering the same reaction-diffusion PDE on $\tilde{S}_{\varepsilon}$ will not give a consistent regularization of the problem.
If we are given boundary conditions such that the connectivity matches that of the desingularization, then the same reaction-diffusion problem on $\tilde{S}_{\varepsilon}$ gives us the desired regularization of the problem.

A sufficient condition for a consistent regularization is that the blow-up procedure does not change the topology of the surface.
We conjecture---since there is evidence in our examples---that this condition is satisfied if for every singular point $\vec{p}$ there is a hyperplane passing through $\vec{p}$ which does not intersect the surface at a different point, i.e., the surface is contained in a half-space defined by this hyperplane through $\vec{p}$.

\subsection{Surfaces with an Isolated Singularity}\label{sect:SingSurf}
\begin{figure}[tbp]
	\begin{minipage}{.475\textwidth}
		\centering
		\includegraphics[width=.9\textwidth]{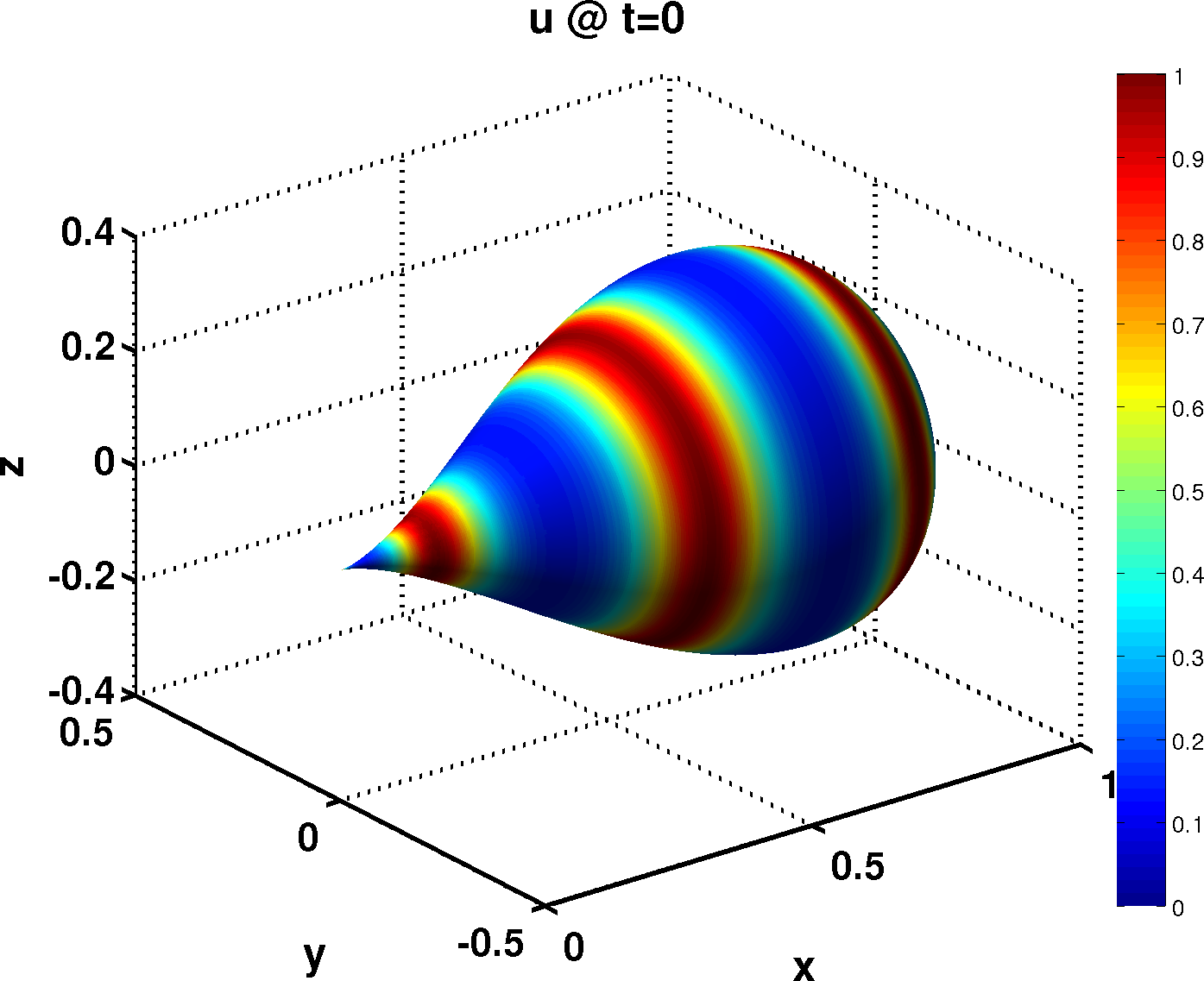}
	\end{minipage}
	\hfill
	\begin{minipage}{.475\textwidth}
		\centering
		\includegraphics[width=.9\textwidth]{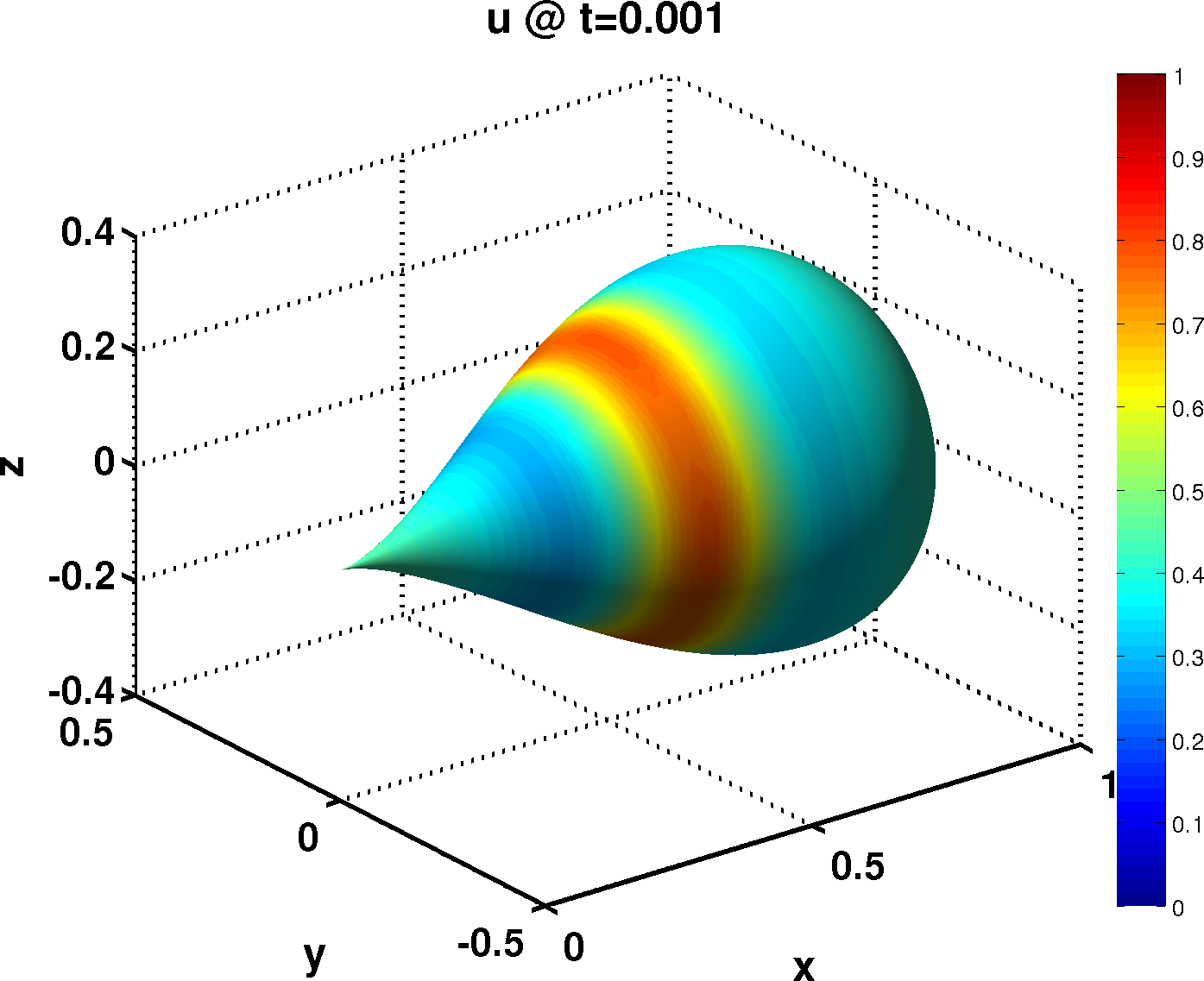}
	\end{minipage}
	\caption{Reaction-diffusion on the $\varepsilon=0.5$-blow-up of the singular surface $S$ given by $y^2+z^2 = x^3-x^4$.
	The images show the pull-down $u_{\varepsilon}$ of the solution $v_{\varepsilon}$ of the IVP \eqref{eqn:RegZeckPDE} onto the singular surface $S$.
	The left image shows the initial state $u_{\varepsilon}$ at time $0$ while the right image shows $u_{\varepsilon}$ at time $t_f=0.001$.} \label{fig:Zeck}
\end{figure}
So far, we have considered curves with a cusp singularity, however, we are also interested in applying our
strategy to surfaces with similar singularities. Consider the singular surface $S$ given by $y^2+z^2 = x^3-x^4$ (see Figure~\ref{fig:Zeck}).
This is the surface of revolution derived from our familiar curve \eqref{eqn:CuspCurve}.
In order to desingularize $S$ we follow the same steps as in Section~\ref{sect:SingularityResolution}.
The only difference is that a description of all straight lines that pass through the origin requires now two equations since $S$ is embedded in $\R^3$. 
Similar to the case of our cuspidal curve, we obtain a one-parameter family of desingularized surfaces $\tilde{S}_{\varepsilon}$ which are embedded in $\R^5$.
These are given by the system 
\begin{equation}\label{eqn:SysBlowUpSurf}
	\begin{aligned}
		\left(\frac{\xi}{\varepsilon}\right)^2 + \bigg( \frac{\eta}{\varepsilon}\bigg)^2 + \left( x - \frac{1}{2}\right)^2 - \frac{1}{4} &=0 \;, \\
		\varepsilon y - \xi x &= 0 \;,\\
		\varepsilon z - \eta x &= 0 \;.
	\end{aligned}
\end{equation}
We remark that all surfaces $\tilde{S}_{\varepsilon}$ are linear deformations of $\tilde{S}=\tilde{S}_{1}$,
and that the deformation matrix is $\Defo_{\varepsilon} = \diag\{1,1,1,\varepsilon,\varepsilon \}$.

The numerical results displayed in Figure~\ref{fig:Zeck} are for the reaction-diffusion problem
\begin{align}
	& \pd_t v_{\varepsilon} = \laplace_{\tilde{S}_{\varepsilon}} v_{\varepsilon} - v_{\varepsilon} \;, \qquad t \in (0,\infty) \;,\; \x \in \tilde{S}_{\varepsilon} 
	\;, \qquad v_{\varepsilon}(0,\x) = u_0(\x)\;, \label{eqn:RegZeckPDE}  
\end{align}
where $\tilde{S}_{\varepsilon}$ is the regular two dimensional surface embedded in $\R^5$ given by \eqref{eqn:SysBlowUpSurf} with $\varepsilon=1/2$
and $\x := (x,y,z,\xi,\eta)$.
The initial condition is
\begin{align}
	& u_0(\x) = \frac{\exp \left( 2 \cdot \cos(4 \arccos (2x-1))^2 \right)}{7.5}  \;. \label{eqn:RegZeckIC}
\end{align}

To obtain a domain amenable to numerical computation we transform, as in Section~\ref{sect:RegTrafo}, via $w_{\varepsilon}( \x) = v_{\varepsilon} (\Defo_{\varepsilon} \x)$ 
to an equivalent problem on $\tilde{S}$. This yields a new domain independent of $\varepsilon$ and the variable coefficient PDE problem:  
\begin{align}
	& \pd_t w_{\varepsilon} = \trace \left( C_{\varepsilon}^T \; D_{\tilde{S}} \left(C_{\varepsilon} \; \nabla_{\tilde{S}} w_{\varepsilon}\right) \right) - w_{\varepsilon} 
	\;, \qquad t \in (0,\infty) \;,\; \x \in \tilde{S}
	\;, \qquad w_{\varepsilon}(0,\x) = u_0(\x) \;. \label{eqn:RegZeckPDETrafo} 
\end{align}
This transformation is a generalization of Lemma~\ref{lem:DiffeoPDESwitch} to surfaces of arbitrary dimension. 
The operators $D_{\tilde{S}}$ and $\nabla_{\tilde{S}}$ denote the Jacobian and the gradient intrinsic to $\tilde{S}$ and  
the coefficient matrix $C_{\varepsilon}$ is given by the expression
\begin{align}
	& C_{\varepsilon} =  \Defo_{\varepsilon} \Proj_{\tilde{S}} \left( \Defo_{\varepsilon}^{-2} - \Defo_{\varepsilon}^{-2} N \; (N^T \Defo_{\varepsilon}^{-2} N)^{-1} N^T \Defo_{\varepsilon}^{-2} \right) \;.
\end{align}
The matrix $N$ is the transpose of the Jacobian of system \eqref{eqn:SysBlowUpSurf}, and its columns span the normal space.

We solved the regularized problem \eqref{eqn:RegZeckPDETrafo} using the explicit closest point iteration (Algorithm~\ref{algo:expl}). 
Our computations were performed on a banded grid around the surface $\tilde{S} \subset \R^5$.
The banded grid is a subset of the virtual $21 \times 21 \times  21 \times  21 \times  21$ Cartesian grid defined on the box
$[-0.3,1.2] \times  [-0.75 , 0.75] \times  [-0.75 , 0.75] \times  [-0.75 , 0.75] \times  [-0.75 , 0.75]$
and contains 0.05 \% of the points of full virtual grid.
Figure~\ref{fig:Zeck} shows $u_{\varepsilon}$ at times $t=0$ and $t=0.001$, where $u_{\varepsilon}$ denotes the pull-down of $v_{\varepsilon}$ onto the singular surface $S$:
\begin{equation}\label{eqn:ZeckPullDown}
	u_{\varepsilon}(t,x,y,z) = v_{\varepsilon} \left(t,x,y,z, \varepsilon \frac{y}{x} , \varepsilon \frac{z}{x} \right) = w_{\varepsilon} \left(t,x,y,z, \frac{y}{x} , \frac{z}{x} \right)\;.
\end{equation}
This higher dimensional numerical result illustrates that the ideas presented are applicable to certain algebraic surfaces of arbitrary dimension and codimension where the surface has isolated singular points.
Of course there are limitations regarding the type of singularity, for example, a singular point must be such that its blow-up is a smooth surface in $\R^n$ which has the same
topology as the original singular surface (cf. Section~\ref{sect:BlowUpGenerality}). 

This example also illustrates that the desingularized surface is embedded in a high dimensional space in general.
This side effect leads us to pay close attention to numerical efficiency. For example, a computational band
should be used that is optimal or nearly optimal; see e.g. \cite[Appendix A]{Colin1}.   

Recall that for the problem of the cuspidal curve we proved in Section~\ref{sect:ConvTheo} that the pull-down $u_{\varepsilon}$ 
converges to the solution $u$ on the original cuspidal curve without the singular point. Such a convergence investigation for surfaces would be an
interesting subject for future work. Note, however, that a different analysis may be required due to the inherently more complex geometry of two dimensional surfaces
over one dimensional curves.  
Assuming convergence of $u_{\varepsilon}$ given by \eqref{eqn:ZeckPullDown} as $\varepsilon \to 0$, we would guess that the limit $u$ is a solution of the problem
\begin{align}
	& \pd_t u = \laplace_{S} u - u \;, \qquad t \in (0,\infty) \;,\; \x = (x,y,z) \in S \wo \{0\} \;, \qquad u(0,\x) = u_0(\x)  \;, 
\end{align}
on the domain $S \wo \{0\}$ where $S = \{ (x,y,z) \in \R^3 : y^2 + z^2 = x^3-x^4\}$.

\subsection{Curves and Surfaces with Multiple Singularities}
\begin{figure}[tbp]
	\centering
	\begin{tikzpicture}[scale=4,thick]
		\draw[->] (-.25,0) -- (1.25,0) node[right] {$x$};
		\foreach \x/\xtext in {.25/\frac{1}{4},.5/\frac{1}{2},.75/\frac{3}{4},1} \draw (\x ,1pt) -- (\x ,-1pt) node[anchor=north] {\scriptsize $\xtext$};

		\draw[->] (0,-.25) -- (0,.25) node[above] {$y$};
		\foreach \y/\ytext in {-.125/-\frac{1}{8}, 0, .125/\frac{1}{8}} \draw (1pt, \y) -- (-1pt,\y) node[anchor=east] {\scriptsize $\ytext$};

		\draw[color=red,domain=-3.141:3.141,smooth,variable=\t] plot ({.5*(1+cos(\t r))},{-.125*sin(\t r)^3});
	\end{tikzpicture}
	\caption{The red curve is given by $y^2=x^3 (1-x)^3$ and possesses two isolated singularities.} \label{fig:DoubleCusp}
\end{figure}
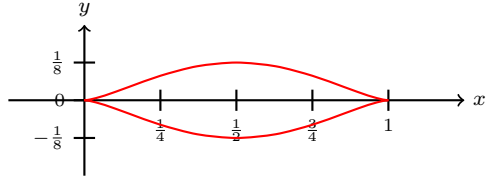
When finitely many isolated singularities arise, our procedure is to resolve one singularity at a time.
We illustrate this generalization by resolving the two isolated singularities of the curve shown in Figure~\ref{fig:DoubleCusp} which is given by 
\begin{equation}
	y^2 - x^3 (1-x)^3 = 0 \;.
\end{equation}
The singular points are the origin $(x,y)=(0,0)$ and the point $(x,y)=(1,0)$.
Starting out with the point $(x,y)=(1,0)$, we adjoin an equation describing the straight lines passing through $(1,0)$. This yields
\begin{equation}
	\begin{aligned}
		y^2 - x^3 (1-x)^3 &= 0 \;, \label{eqn:dCusp}\\
		y - \frac{z}{\varepsilon_1} (x-1) &= 0 \;.
	\end{aligned}
\end{equation}
The next steps agree exactly with Section~\ref{sect:SingularityResolution}. Specifically, we replace $y$ in the first equation of \eqref{eqn:dCusp} with $(x-1) z/\varepsilon$, we factor out $(1-x)^2$, and we
desingularize by neglecting this factor. This yields
\begin{equation}\label{eqn:ResFirst}
	\begin{aligned}
		\left(\frac{z}{\varepsilon_1}\right)^2 - x^3 (1-x) &= 0 \;,\\
		y - \frac{z}{\varepsilon_1} (x-1) &= 0 \;.
	\end{aligned}
\end{equation}
In the new system \eqref{eqn:ResFirst}, only the first equation is singular. Indeed, it is exactly our example from Section~\ref{sect:SingularityResolution}. 
We therefore adjoin an equation for the straight lines passing through the origin $(x,z)=(0,0)$ as in Section~\ref{sect:SingularityResolution}.
This yields the desingularized system
\begin{equation}
	\begin{aligned}
		\left(\frac{w}{\varepsilon_1 \varepsilon_2}\right)^2 + \left(x-\frac{1}{2}\right)^2 &= \frac{1}{4} \;,\\
		y - \frac{z}{\varepsilon_1} (x-1) &= 0 \;,\\
		z - \frac{w}{\varepsilon_2} x &= 0\;.
	\end{aligned}
\end{equation}
This system defines a curve in four dimensional space, $(x,y,z,w) \in \R^4$, which can be parametrized by 
$\gamma(\theta) = ( (1+\cos(\theta))/2 , -\sin(\theta)^3/8, \varepsilon_1 \sin(\theta) (1+\cos(\theta))/4, \varepsilon_1 \varepsilon_2 \sin(\theta)/2 )$.
The first two components of $\gamma$ give a parametrization of the original singular curve.
Formally, we can introduce a new parameter $\varepsilon_i$ for each singularity. But looking at the deformation matrix
$\Defo_{\varepsilon_1,\varepsilon_2} = \diag\{1,1,\varepsilon_1,\varepsilon_1 \varepsilon_2 \}$ we observe that it is sufficient to keep only parameter $\varepsilon_1 = \varepsilon$
and set the other equal to $\varepsilon_2 = 1$. The deformation matrix $\Defo_{\varepsilon} = \diag\{1,1,\varepsilon,\varepsilon\}$ will still give us the projection in the case of $\varepsilon=0$.
Accordingly, the desingularized system is
\begin{equation}
	\begin{aligned}
		\left(\frac{w}{\varepsilon}\right)^2 + \left(x-\frac{1}{2}\right)^2 &= \frac{1}{4} \;,\\
		y - \frac{z}{\varepsilon} (x-1) &= 0 \;,\\
		z - w x &= 0\;.
	\end{aligned}
\end{equation}

Finally, we remark that the dimension of the embedding space increases only by one per resolved singularity. After the resolution of the first singularity
we obtained system \eqref{eqn:ResFirst} in $\R^3$, so one might have expected that for the resolution of the second singularity we had to adjoin two equations  
for the description of a straight line in $\R^3$ (cf. Section~\ref{sect:SingSurf}). That would have yielded a system of four equations in five variables.
Interestingly, the resolution of a singularity does not change the number of variables in the first equation. Thus, every single resolution operation can be performed by
considering only the first equation in two dimensional space.

Analogously, for a two dimensional algebraic surface in $\R^3$ with several isolated singularities we adjoin two equations each time we resolve a singularity.
For example, consider the surface obtained from revolving the curve shown in Figure~\ref{fig:DoubleCusp} around the $x$-axis:
\begin{equation}
	y^2 + z^2 - x^3 (1-x)^3 = 0 \;.
\end{equation}
The resolution of the singularities (one singularity at a time) gives the desingularized system
\begin{equation}
	\begin{aligned}
		\left(\frac{a}{\varepsilon}\right)^2 + \left(\frac{b}{\varepsilon}\right)^2 + \left(x-\frac{1}{2}\right)^2 &= \frac{1}{4} \;,\\
		y - \frac{a}{\varepsilon} (x-1) &= 0 \;,\\
		z - \frac{b}{\varepsilon} (x-1) &= 0 \;,\\
		a - c x &= 0\;,\\
		b - d x &= 0\;.
	\end{aligned}
\end{equation}
This is a system of five equations involving seven variables $(x,y,z,a,b,c,d) \in \R^7$ as opposed to a system of seven equations in nine variables. 

\section{Conclusion}
This paper presents a framework and numerical embedding method for evolving reaction-diffusion equations on algebraic surfaces with isolated singularities.
We explored and proved several analytical results in both geometry and geometric PDEs in Section~\ref{sect:Regularization}. 
In particular, we used the blow-up technique from algebraic geometry to resolve singularities in a domain,
proved the convergence of the solutions for the reaction-diffusion equation on $\tilde{S}_{\varepsilon}$ to the solution of the reaction-diffusion equation on $S$,
and constructed a variable-coefficient PDE on $\tilde{S}$ that corresponds to a PDE posed on the domain $\tilde{S}_{\varepsilon}$. 
Together these results provide a solid mathematical foundation from which we built a numerical technique. 

In constructing our numerical method, we first applied Lemma~\ref{lem:DiffeoPDESwitch}.
This lemma allows us to pose a variable-coefficient PDE on a smooth and numerically well-resolved curve $\tilde{S}$ that corresponds to the reaction-diffusion PDE on $\tilde{S}_{\varepsilon}$.
We then applied the Closest Point Method to evolve the variable-coefficient reaction-diffusion equation. 
Our numerical results are stable and demonstrate that the method obtains second order accuracy in space. 

In Section~\ref{sect:Extensions} we discussed extensions as well as limitations of our approach. In particular, we saw that preservation of the topology plays an important role.
Notably, the convergence theory of Section~\ref{sect:ConvTheo} applies to all closed curves with finitely many singularities as long as the blow-up procedure preserves the topology of the curve.
This is because the preservation of the topology implies that \eqref{eqn:ALdiff} in the proof of Lemma~\ref{lem:FourierConvergence} holds (with a different constant).

As desingularization of the PDE-domain proved to be useful for regularization we are interested in combining embedding techniques such as the Closest Point Method
with desingularization other than blow-up. This might help to reduce some of the limitations
of our method that have been identified in this paper. For example, the problem of topological change with the cardioid as discussed in Section~\ref{sect:BlowUpGenerality} 
is resolved if we use the curve parametrized by $\tilde{\gamma}_{\varepsilon} = ((1+\cos(\theta))\cos(\theta)/2 , (1+\cos(\theta)) \sin(\theta)/2, \varepsilon \sin(\theta))$.
This curve is suitable for regularizing the cardioid since it corresponds to the cardioid by projection, is smooth, and has the same topology as the cardioid.
Besides further extending our study of surfaces, the investigation of other desingularization techniques will be an interesting subject for future work.

\begin{acknowledgements}
	The authors thank Colin Macdonald for many helpful discussions. 
	The authors thank also all the people who contributed to the \verb|cp_matrices| code 
	(see \url{github.com/cbm755/cp_matrices}), in particular Colin Macdonald, Ingrid von Glehn, and Yujia Chen. 
\end{acknowledgements}

% BibTeX users please use one of
%\bibliographystyle{spbasic}      % basic style, author-year citations
\bibliographystyle{spmpsci}      % mathematics and physical sciences
\bibliography{FoCMBlowUp}			% name your BibTeX data base

\end{document}